\makeatletter \@addtoreset{equation}{section} \makeatother
\newtheorem{theorem}{Theorem}[section]
\newtheorem{corollary}[theorem]{Corollary}
\newtheorem{lemma}[theorem]{Lemma}
\newtheorem{proposition}[theorem]{Proposition}
\theoremstyle{definition}
\newtheorem{definition}[theorem]{Definition}
\newtheorem{example}[theorem]{Example}
\newtheorem{remark}[theorem]{Remark}
\newtheorem{observation}[theorem]{Observation}
\def\min{{\rm min}}
\def\fq{{\mathbb F}_q}
\newcommand{\lapprox}{\stackrel{<}{_{\sim}}}
\begin{document}
\title{Further results on multiple coverings of the farthest-off points}

\author{}


\maketitle

\centerline{\scshape Daniele Bartoli}
\medskip
{\footnotesize
 \centerline{Department of Mathematics}
 \centerline{Ghent University, Gent, 9000, Belgium}
 }

\bigskip

\centerline{\scshape Alexander A. Davydov }
\medskip
{\footnotesize
 \centerline{Institute for Information Transmission Problems (Kharkevich
 institute) }
 \centerline{Russian Academy of
 Sciences, GSP-4, Moscow, 127994, Russian Federation}
}

\bigskip

\centerline{\scshape Massimo Giulietti, Stefano Marcugini and
Fernanda Pambianco}
\medskip
{\footnotesize
 \centerline{Department of Mathematics and Informatics}
 \centerline{Perugia University, Perugia, 06123, Italy}
 }

\bigskip

{\bf Keywords:} Multiple coverings, covering codes, farthest-off points, saturating sets in projective spaces, covering density

\begin{abstract} Multiple coverings of the farthest-off points ($(R,\mu)$-MCF codes) and the corresponding $(\rho,\mu)$-saturating sets in
projective spa\-ces   $PG(N,q)$ are considered. We propose and
develop some methods which allow us to obtain new small
$(1,\mu)$-saturating sets and short
$(2,\mu)$-MCF codes with  $\mu$-density either equal to 1
(optimal saturating sets and almost perfect MCF-codes) or close
to 1 (roughly $1+1/cq$, $c\ge1$). In particular, we  provide
new algebraic constructions and some bounds. Also, we  classify
minimal and optimal $(1,\mu)$-saturating sets in $PG(2,q)$, $q$
small.
\end{abstract}

\section{Introduction}
A code $C$ is called  $(R,\mu)$-{\em multiple covering of the
farthest-off points} (or $(R,\mu)$-MCF for short) if for every
word $x$ at distance $R$ from $C$ there are at least $\mu$
codewords in the Hamming sphere $S(x,R)$, where $R$ is the
covering radius of $C$.

Multiple coverings can be viewed as a generalization of  covering codes,
see \cite{BrLyWi-Handbook,Coh}. Motivations for studying
MCF codes come from the generalized football pool
problem (see e.g. \cite{HHLO-AMM,MR1082845,MR1441669} and the
references therein) and list decoding (see e.g.
\cite{WCL1991}).

In
\cite{Coh,MMCCFF2,HHLO-Siam,MMCCFF5,HonkLits1996,Quistorff,MMCCFF1}
results on MCF codes, mostly concerning the binary and the
ternary cases, can be found. The development of this topic for
arbitrary $q$ was presented in
\cite{BDGMP-ACCT2012,GiulBritCombConf,PDBGM-ExtendAbstr} and in
the recent paper \cite{BDGMP-AMC2015}. In particular, important
parameters of $(R,\mu)$-MCF codes such as the $\mu$-{\em
density} and the $\mu$-length function have been introduced in
\cite{BDGMP-AMC2015}. In the same paper the
notion of a $(\rho,\mu)$-saturating set as the geometrical
counterpart of $(\rho+1,\mu )$-MCF codes was proposed.  Many
useful results and constructions of MCF codes were obtained in
\cite{BDGMP-AMC2015} by geometrical methods. For an
introduction to projective spaces over finite fields see
\cite{Hirs1998}.

The $\mu$-{\em density} of an $(R,\mu)$-MCF code $C$
is the average value of $\frac{1}{\mu}\#(S(x,R)\cap C)$, where
$x$ is a word at distance $R$ from $C$. The $\mu$-{\em density}
is greater than or equal to~1. If the minimum distance $d$ of
$C$ is at least $2R-1$, then the best $\mu$-density among
linear $q$-ary codes with same codimension $r$ and covering
radius $R$ is achieved by the shortest ones. An important class
of MCF codes are \emph{almost perfect} and \emph{perfect} MCF
codes which correspond to \emph{optimal} saturating sets. For
these codes each word at distance $R$ from the code belongs to
{exactly} $\mu $ spheres centered in codewords; they have
the best possible $\mu$-density, i.e. equal to 1. The
$\mu$-length function $\ell _{\mu }(R,r,q)$  is defined as the
smallest length $n$ of a linear $(R,\mu )$-MCF code with
parameters $ [n,n-r,d]_{q}R$, $d\geq 3$.

In this paper, we continue and develop the geometrical
approach of \cite{BDGMP-AMC2015} for constructing MCF codes
with small $\mu$-density. We present a number of
$(1,\mu)$-satura\-ting sets (and the corresponding
$(2,\mu)$-MCF codes) with good parameters.

In the space $PG(N,q)$, $q>2$ even, we obtain
$(1,\mu)$-saturating sets with $\mu=\frac{q-2}{2}$ such that the 
$\mu$-density of the corresponding $(2,\mu)$-MCF code tends to
1 when $N$ is fixed and $q$ tends to infinity, see Section
\ref{q even}.

New results concerning $(1,\mu)$-saturating sets in planes
$PG(2,q)$ are presented in Section
\ref{sezquattro}. We give some upper and lower bounds on the
size of $(1,\mu)$-saturating sets, see Subsection
\ref{subsec_bounds}. Also, we present many examples of optimal
saturating sets  using classical geometrical objects such as 
partitions of $PG(2,q)$ in Singer point-orbits and sets of
convenient lines, see  Sections \ref{sec_classic_object} and
\ref{sec_classific} and Subsection \ref{subsec_Singer}.
Unfortunately, it is not always possible to construct almost
perfect codes; in some cases we construct examples of
$(1,\mu)$-saturating sets with $\mu$-density roughly of the same 
order of magnitude of $1+1/cq$, $c>1$. In general,
we give families of $\mu$-saturating sets of size less than
$\mu \overline{\ell}(2,3,q)$, where $\overline{\ell}(2,3,q)$ is
the minimum known size of a $1$-saturating set in
$PG(2,q)$, see e.g. \cite{BFMP-JG2013,DGMP-AMC2011,GEJC} and
the references therein.

Another achievement of this paper is the
classification of minimal and optimal $(1,\mu)$-saturating sets
in $PG(2,q)$ for small $q$, see Section \ref{sec_classific}.

The paper is organized as follows. In Section
\ref{secDefinCodTheor} we recall some  definitions and results
from  \cite{BDGMP-AMC2015} concerning MCF codes and
$(\rho,\mu)$-saturating sets; in Section \ref{seztre} we
focus on $(1,\mu)$-saturating sets. In Section \ref{q even} we
deal with $(1,\mu)$-saturating sets in $PG(N,q)$, $q$ even,
having small size. In Section \ref{sec_classic_object} perfect
and almost perfect $(2,\mu)$-MCF codes are constructed from classical geometrical objects. In Section
\ref{sezquattro} we present some constructions and bounds on
$(1,\mu)$-saturating sets in $PG(2,q)$. Finally, in
Section~\ref{sec_classific}, computational results on the 
classification of minimal and optimal $(1,\mu)$-saturating sets in $PG(2,q)$ are presented.

\section{Multiple coverings and $(\rho,\mu)$-saturating sets \label{secDefinCodTheor}}
In the following we use the same notation as in
\cite{BDGMP-AMC2015}. An $(n,M,d)_{q}R$  code $C$ is a code of
length $n$, cardinality $M$, minimum distance $d$, and covering
radius $R$, over the finite field ${\mathbb{F}} _{q}$ with $q$
elements.  If $C$ is linear of dimension $k$ over $\fq$, then
$C$ is also said to be an  $[n,k,d]_{q}R$ code. When either $d$
or $R$ are not relevant or unknown they can be omitted in the
above notation.
Let ${\mathbb{F}}_{q}^{n}$ be the linear space of dimension $n$
over ${\mathbb{F}}_{q}$, equipped with the Hamming distance.
The Hamming sphere of radius $j$ centered at $x\in $
${\mathbb{F}}_{q}^{n}$ is denoted by $S(x,j)$. The size
$V_{q}(n,j)$ of such a sphere is
\begin{equation*}
V_{q}(n,j)=\sum_{i=0}^{j}{\binom{n}{{i}}}(q-1)^{i}.
\label{eqDCT_size_sphere}
\end{equation*}
 Let
$\overline{S}(x,R)$ be the surface of the sphere $S(x,R)$. For
an $ (n,M)_{q}R$ code $C$, $ A_{w}(C)$ denotes  the number of
codewords in $C$ of weight $w$, and $f_{\theta }(e,C)$ denotes
the number of codewords at distance $\theta $ from a vector $e$
in ${\mathbb{F}}_{q}^{n}$; equivalently, $f_{\theta }(e,C)=$
$\#(\overline{S}(e,\theta )\cap C).$
 Let
\begin{equation*}
\delta (C,R)=\frac{\sum_{x\in {\mathbb{F}}_{q}^{n}}\#\{c\in C\mid d(c,x)\leq
R\}}{q^{n}}=\frac{M\cdot V_{q}(n,R)}{q^{n}}  \label{eq2_delta_CR}
\end{equation*}
be the \emph{density} of an $(n,M)_{q}R$ covering code $C$. In general,   $\delta (C,R)\geq 1$, and  equality holds if and only if $C$ is a
perfect code.

\begin{definition}[\!\!\cite{BDGMP-AMC2015,Coh,HHLO-AMM,HHLO-Siam}]\label{30may}
$\textrm{ }$\\
\vspace*{-0.4 cm}
\begin{enumerate}
\item An $(n,M)_{q}R$ code $C$ is said to be an \emph{$(R,\mu )$
multiple covering of the farthest-off points} $((R,\mu )$-MCF
code for short) if for all $x\in {\mathbb{F}}_{q}^{n}$ such
that $ d(x,C)=R$ the number of codewords $c$ such that
$d(x,c)=R$ is at least $\mu$.
\item An $(n,M,d(C))_{q}R$ code $C$ is said to be an \emph{$(R,\mu )$
almost perfect multiple covering of the farthest-off points}
$((R,\mu )$-APMCF code for short) if for all $x\in
{\mathbb{F}}_{q}^{n}$ such that $ d(x,C)=R$ the number of
codewords $c$ such that $d(x,c)=R$ is exactl\emph{}y $\mu$. If,
in addition, $d(C)\geq2R$ holds, then the code is called
\emph{$(R,\mu )$  perfect multiple covering of the farthest-off
points} $((R,\mu )$-PMCF code for short).
\end{enumerate}
\end{definition}

In the literature, MCF codes are also called multiple coverings
of deep holes, see e.g. \cite[Chapter 14]{Coh}.

As already pointed out in \cite[Sect. 2]{BDGMP-AMC2015}, there exists a connection between $((R,\mu )$-MCF and $((R,\mu
)$-PMCF with weighted $\mathbf{m}$-coverings; see e.g.
\cite[Sect.~13.1]{Coh}.

\begin{definition}[\!\!\cite{BDGMP-AMC2015}]
Let $C$ be a $((R,\mu )$-MCF code. Let $\{x_1,\ldots,x_{N_{R}(C)}\}$ be the
set of vectors in ${\mathbb{F}}_{q}^{n}$ with distance $R$ from $C$. The
{\em $\mu$-density} of $C$ is
\begin{equation}
\gamma_\mu
(C,R)=\frac{\sum_{i=1}^{N_{R}(C)}f_{R}(x_{i},C)}{\mu N_{R}(C)}.
  \label{eq1_gamma(C,R)isAverage}
\end{equation}
\end{definition}
It is easily seen that $\gamma_\mu (C,R)\ge 1$, and that $C$ is
an $(R,\mu)$ APMCF code precisely when equality holds. In general, this parameter is a measure of the quality of an $((R,\mu )$-MCF code.

The goal of this paper is the construction of $((R,\mu
)$-MCF codes with small $\mu$-density.

We recall the following proposition from
\cite{BDGMP-AMC2015} concerning the $\mu$-density of an $((R,\mu
)$-MCF code.
\begin{proposition}\label{lem1_delta}
Let $C$ be
a linear
$[n,k,d(C)]_{q}R$ code with $d(C)\geq 2R-1$. If $C$ is
 $(R,\mu )$-MCF, then
\begin{equation*}
\gamma_{\mu }(C,R)=\frac{{\binom{n}{{R}}}\cdot
(q-1)^{R}-{\binom{2R-1}{{R-1} }}\cdot A_{2R-1}(C)}{\mu \cdot
(q^{n-k}-V_{q}(n,R-1))}. \label{eq1_mudens_d>=2R-1}
\end{equation*}
\end{proposition}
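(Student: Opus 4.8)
The plan is to evaluate the numerator and the denominator of the defining expression \eqref{eq1_gamma(C,R)isAverage} separately and then simplify. For the denominator I would first use the hypothesis $d(C)\ge 2R-1$ to force the balls $S(c,R-1)$, $c\in C$, to be pairwise disjoint: if two distinct codewords $c_1,c_2$ shared a vector in their $(R-1)$-balls, then $d(c_1,c_2)\le 2(R-1)=2R-2<d(C)$, a contradiction. Since $C$ has covering radius $R$, every vector lies within distance $R$ of $C$, so $N_R(C)$, the number of vectors at distance exactly $R$, equals $q^n$ minus the number of vectors at distance at most $R-1$; by disjointness the latter is exactly $M\cdot V_q(n,R-1)$. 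Hence $N_R(C)=q^n-M\,V_q(n,R-1)=q^k\bigl(q^{n-k}-V_q(n,R-1)\bigr)$, using $M=q^k$. This already yields the denominator of the claimed formula up to the factor $q^k$.

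For the numerator $\sum_i f_R(x_i,C)$ I would count pairs $(x,c)$ with $c\in C$, $d(x,c)=R$, and $d(x,C)=R$. Exploiting linearity, for a fixed $c$ the map $x\mapsto x-c$ is a bijection of $\fq^n$ preserving $C$ and distances to $C$, so the number of admissible $x$ is independent of $c$ and equals the number of weight-$R$ vectors $y$ with $d(y,C)=R$. Thus the numerator equals $M$ times
\[
N:=\#\{\,y\in\fq^n:\ \mathrm{wt}(y)=R,\ d(y,C)=R\,\}.
\]
Since $\overline S(0,R)$ has $\binom{n}{R}(q-1)^R$ elements, and a weight-$R$ vector $y$ fails $d(y,C)=R$ precisely when some \emph{nonzero} codeword $c$ satisfies $\mathrm{wt}(y-c)\le R-1$, the task reduces to counting these ``bad'' vectors.

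For the bad vectors I would argue as follows. If $\mathrm{wt}(y)=R$ and $\mathrm{wt}(y-c)\le R-1$ for a nonzero $c$, then $\mathrm{wt}(c)\le \mathrm{wt}(y)+\mathrm{wt}(y-c)\le 2R-1$; together with $\mathrm{wt}(c)\ge d(C)\ge 2R-1$ this forces $\mathrm{wt}(c)=2R-1$ and makes the triangle inequality tight, so in fact $\mathrm{wt}(y-c)=R-1$ and $\supp(y),\supp(y-c)$ are disjoint sets partitioning $\supp(c)$. Tightness also yields uniqueness of $c$: two such codewords $c_1\ne c_2$ would give $\mathrm{wt}(c_1-c_2)\le 2R-2<d(C)$. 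Finally, for a fixed weight-$(2R-1)$ codeword $c$ the admissible $y$ are exactly the restrictions of $c$ to an $R$-subset of $\supp(c)$, so there are $\binom{2R-1}{R}=\binom{2R-1}{R-1}$ of them. Summing over the $A_{2R-1}(C)$ codewords of weight $2R-1$ and using uniqueness to avoid overcounting gives $N=\binom{n}{R}(q-1)^R-\binom{2R-1}{R-1}A_{2R-1}(C)$. Substituting the numerator $MN$ and denominator $\mu N_R(C)$ into \eqref{eq1_gamma(C,R)isAverage}, the common factor $M=q^k$ cancels and the stated identity follows.

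The step I expect to be the main obstacle is the exact enumeration of bad vectors: one must verify that only codewords of weight exactly $2R-1$ contribute, that each bad $y$ is attached to a unique such codeword, and that the support analysis gives precisely $\binom{2R-1}{R-1}$ vectors per codeword. Everything else is bookkeeping once the disjointness of the $(R-1)$-balls and the translation invariance have been set up.
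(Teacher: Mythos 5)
Your proof is correct and complete. Note that the paper itself gives no proof of this proposition: it is merely recalled from the cited reference \cite{BDGMP-AMC2015}, so there is no in-paper argument to compare against. Your derivation is the standard one and all the delicate points are handled properly: the disjointness of the balls $S(c,R-1)$ from $d(C)\geq 2R-1$ gives $N_R(C)=q^k\bigl(q^{n-k}-V_q(n,R-1)\bigr)$; translation invariance reduces the numerator to $M$ times the count of weight-$R$ vectors $y$ with $d(y,C)=R$; and the tight triangle inequality correctly forces any ``bad'' $y$ to arise from a unique codeword of weight exactly $2R-1$ by restriction to an $R$-subset of its support, yielding the term $\binom{2R-1}{R-1}A_{2R-1}(C)$. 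The cancellation of $M=q^k$ then gives exactly the stated formula.
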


In the rest of the paper we will  assume that
\begin{equation}
d(C)\geq 2R-1.  \label{eq2_dC>=2R-1}
\end{equation}
Let $ t=\left\lfloor \frac{d-1}{2}\right\rfloor $ be the number
of errors that can be corrected by a code with minimum distance
$d$. Note that under Condition (\ref{eq2_dC>=2R-1}), $R=t+1$;
equivalently, $C$ is a quasi-perfect code in the classical
sense.

The following definition of a $(\rho,\mu)$-saturating set in $PG(N,q)$ is given as in \cite{BDGMP-AMC2015}.

\begin{definition}\label{def_rho_mu_sat}
Let $S=\{P_{1},\ldots ,P_{n}\}$ be a subset of points of
$PG(N,q)$. Then $S$ is said to be $(\rho ,\mu )$-saturating if:

\begin{itemize}
\item[(M1)] $S$ generates $PG(N,q)$;

\item[(M2)] there exists a point $Q$ in $PG(N,q)$ which does not belong to
any subspace of dimension $\rho -1$ generated by the points of $S$;

\item[(M3)] every point $Q$ in $PG(N,q)$ not belonging to any subspace of
dimension $\rho -1$ generated by the points of $S$, is such that the number
of subspaces of dimension $\rho $ generated by the points of $S$ and
containing $Q$, counted with multiplicity, is at least $\mu $. The
multiplicity $m_{T}$ of a subspace $T$ is computed as
the number of distinct sets of $\rho +1$
independent points contained in $T\cap S$.
\end{itemize}
\end{definition}
Note that if any $\rho +1$ points of $S$ are linearly independent (that is,
the minimum distance of the corresponding code is at least $\rho +2$), then
\begin{equation*}
m_{T}={\binom{{\#(T\cap S)}}{{\rho +1}}.}
\end{equation*}

A $(\rho ,\mu )$-saturating $n$-set in $ PG(N,q)$ is called
\emph{minimal }if it does not contain a $(\rho ,\mu
)$-saturating $(n-1)$-set in $PG(N,q)$.

Let $S$ be a $(\rho ,\mu)$-saturating $n$-set in $ PG(n-k-1,q).
$ The set $S$ is called \emph{optimal }$(\rho ,\mu)$-{\em
saturating set }$((\rho ,\mu )$-OS set for short) if every
point $Q$ in $ PG(n-k-1,q)$ not belonging to any subspace of
dimension $\rho -1$ generated by the points of $S$, is such
that the number of subspaces of dimension $\rho $ generated by
the points of $S$ and containing $Q$, counted with
multiplicity, is exactly $\mu $.

An $[n,k]_{q}R$ code $C$ with $R=\rho +1$ {\em
corresponds} to a $(\rho ,\mu )$-saturating $n$-set $S$ in $
PG(n-k-1,q)$ if $C$ admits a parity-check matrix whose columns
are homogeneous coordinates of the points in $S$.


By \cite[Proposition 3.6]{BDGMP-AMC2015}, a linear $[n,k]_{q}R$
code $C$ corresponding to a $(\rho ,\mu )$-saturating $n$-set
$S$ in $PG(n-k-1,q)$ is a $(\rho +1,\mu )$-MCF code. Also, if
$S$ is a $(\rho,\mu)$-OS set, the corresponding linear $[n,
k]_{q}R$ code $C$ is a $(\rho+1,\mu)$ APMCF code with
$\gamma_{\mu}(C,\rho + 1) = 1$. If in addition $d(C)=2R$, then it
is a $(\rho+1,\mu)$ PMCF code.

\section{$(1,\mu)$-saturating sets}\label{seztre}

For $\rho=1$ Conditions (M1)-(M3) read as follows:

\begin{itemize}
\item[(M1)] $S$ generates $PG(N,q)$;

\item[(M2)] $S$ is not the whole $PG(N,q)$;

\item[(M3)] every point $Q$ in $PG(N,q)$ not belonging to $S$ is such that
the number of secants of $S$ through $Q$ is at least $\mu $, counted with
multiplicity. The multiplicity $m_{\ell }$ of a secant $\ell $ is computed
as
\begin{equation*}
m_{\ell }={\binom{{\#(\ell \cap S)}}{{2}}}.
\end{equation*}
\end{itemize}

As already observed in \cite{BDGMP-AMC2015}, from Conditions (M1)-(M3) the following holds.
\begin{proposition}
\label{prop optimal (1,mu) OS}
\begin{itemize}
                    \item[(i)] Let $C$ be the linear
                        $[n,n-N-1]_{q}2$ code corresponding
                        to a $(1,\mu )$-saturating $n$-set
$S$. Then $\mu\gamma_\mu (C,2)$ is equal to the average number
of secants of $S$, counted with multiplicity, through a fixed
point $Q\in PG(N,q)\setminus S$.
\item[(ii)]  Let $S$ be a $(1,\mu )$-saturating set in $
    PG(N,q)$.
    Then $S$ is a $(1,\mu )$-\emph{OS set}
precisely when each point $Q\in PG(N,q)\setminus S$ belongs to
exactly $\mu $ secants of $S$, counted with multiplicity.
\end{itemize}
\end{proposition}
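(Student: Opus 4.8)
The plan is to translate everything into the language of syndromes and then count. Write $H$ for a parity-check matrix of $C$ whose columns are homogeneous coordinates of the points $P_1,\dots,P_n$ of $S$, so that $H$ is an $(N+1)\times n$ matrix over $\fq$ and the syndrome map $x\mapsto Hx^{T}$ sends each coset of $C$ to a point of the syndrome space $\fq^{\,N+1}$. First I would check that the vectors $x$ with $d(x,C)=2$ are exactly those whose syndrome $s=Hx^{T}$ is nonzero, is not proportional to a single column of $H$, and lies in the span of two columns. Projectively this says that $Q=\langle s\rangle$ is a point of $PG(N,q)\setminus S$ lying on some secant of $S$; by (M3) every point of $PG(N,q)\setminus S$ does lie on at least $\mu\ge 1$ secants, so the cosets of minimum weight $2$ are precisely those whose syndrome determines a point of $PG(N,q)\setminus S$, and there are no cosets of minimum weight $\ge 3$.

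The key step for (i) is the identification of $f_{2}(x,C)$ with a weighted secant count. Since $c\in C$ lies at distance $2$ from $x$ exactly when $x-c$ is a weight-$2$ vector in the coset of $x$, I would show that $f_{2}(x,C)$ equals the number of weight-$2$ vectors $e$ with $He^{T}=s$. Each such $e$, supported on positions $i,j$ with entries $e_i,e_j\in\fq^{*}$, satisfies $e_iH_i+e_jH_j=s$, where $H_i$ denotes the $i$-th column of $H$; since $H_i$ and $H_j$ are independent (the points $P_i,P_j$ are distinct) these coordinates are uniquely determined by the pair $\{P_i,P_j\}$, and the relation forces $Q=\langle s\rangle$ to lie on the line $P_iP_j$. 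This gives a bijection between the weight-$2$ vectors in the coset and the unordered pairs of points of $S$ collinear with $Q$. Grouping these pairs by the line they span, a secant $\ell$ through $Q$ contributes exactly $\binom{\#(\ell\cap S)}{2}=m_{\ell}$ pairs, whence $f_{2}(x,C)=\sum_{\ell\ni Q}m_{\ell}$, the number of secants of $S$ through $Q$ counted with multiplicity; note this depends only on the projective point $Q$, not on the representative $s$.

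To finish (i) I would average. A fixed $Q\in PG(N,q)\setminus S$ is the projective class of $q-1$ distinct nonzero syndromes, each the syndrome of one coset containing $q^{k}$ words ($k=n-N-1$), and by the previous step every such word contributes the same value $\nu(Q):=\sum_{\ell\ni Q}m_{\ell}$. Hence $\sum_i f_2(x_i,C)=(q-1)q^{k}\sum_{Q\notin S}\nu(Q)$ and $N_2(C)=(q-1)q^{k}(\theta_N-n)$ with $\theta_N=(q^{N+1}-1)/(q-1)$; dividing, the factor $(q-1)q^{k}$ cancels and $\mu\gamma_\mu(C,2)=\sum_i f_2(x_i,C)/N_2(C)$ becomes the average of $\nu(Q)$ over $Q\in PG(N,q)\setminus S$, which is (i). Part (ii) is then immediate: specialising the definition of an OS set to $\rho=1$, the $0$-dimensional subspaces generated by $S$ are the points of $S$ and the $1$-dimensional ones are the secants, so $S$ is $(1,\mu)$-OS exactly when every $Q\in PG(N,q)\setminus S$ lies on exactly $\mu$ secants with multiplicity; equivalently, combining (M3) (every such $Q$ lies on \emph{at least} $\mu$ secants) with (i), this happens precisely when the average $\mu\gamma_\mu(C,2)$ equals $\mu$, i.e.\ when $\gamma_\mu(C,2)=1$. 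The main obstacle I anticipate is purely bookkeeping in the key step: verifying that the weight-$2$ coset representatives are counted by the secant multiplicities $\binom{\#(\ell\cap S)}{2}$ with neither over- nor under-counting, and confirming that the scalar-multiple factor $q-1$ and the coset size $q^{k}$ cancel cleanly so that only the projective average survives.
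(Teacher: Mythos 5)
Your argument is correct: the identification of $f_2(x,C)$ with the multiplicity-weighted secant count through the syndrome point, and the cancellation of the factor $(q-1)q^{k}$ in the averaging, both check out, and part (ii) is indeed just the $\rho=1$ specialisation of the definition of an OS set combined with (i) and (M3). The paper itself states this proposition without proof (it is recalled from \cite{BDGMP-AMC2015} as a consequence of (M1)--(M3)), and your syndrome/coset-counting derivation is precisely the standard argument underlying that correspondence, so there is nothing to compare beyond noting that your write-up supplies the details the paper omits.
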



Note that as $R=2$, the condition $d(C)>2R-1$
reads as $ d(C)>3$.

 Let $B_{3}(S)$
denote the number of triples of collinear points in $S$. The
following is a characterization of $(1,\mu )$-\emph{OS sets} in
$ PG(N,q)$; see \cite{BDGMP-AMC2015}.

\begin{proposition}\label{prop optimal (1,mu) OSbis} Let $S$ be a
      $(1,\mu )$-saturating set in $ PG(N,q)$. Let $C_{S}$ be the $[n,n-N-1]_{q}2$ code corresponding to $S$.
\begin{itemize}
  \item [(i)] For the $\mu$-density of $C_{S}$ it holds
      that
\begin{equation}
\gamma_{\mu}(C_{S},2)=\frac{\frac{n-1}{2}(q-1)-\frac{3}{n}B_{3}(S)}{\mu \cdot \left(
\frac{\#PG(N,q)}{n} -1\right)}.  \label{eq2_uniformity density}
\end{equation}
  \item [(ii)] The set $S$ is a $(1,\mu )$-\emph{OS set}
      if and only if
\begin{equation*}
\frac{n-1}{2}(q-1)-\frac{3}{n}B_{3}(S)=\mu \cdot \left(
\frac{\#PG(N,q)}{n} -1\right).  \label{eq2_uniformity}
\end{equation*}
\end{itemize}
\end{proposition}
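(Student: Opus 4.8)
The plan is to compute the quantity $\mu\gamma_\mu(C_S,2)$ directly as an average, exploiting part (i) of Proposition \ref{prop optimal (1,mu) OS}, which identifies $\mu\gamma_\mu(C_S,2)$ with the average number of secants of $S$ (counted with multiplicity) through a point $Q\in PG(N,q)\setminus S$. Thus the whole statement reduces to a double-counting identity: I compute the total number of incidences between points $Q\in PG(N,q)\setminus S$ and secants of $S$ through them, each secant weighted by its multiplicity $m_\ell=\binom{\#(\ell\cap S)}{2}$, and then divide by the number of such points $Q$, which is $\#PG(N,q)-n$.

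First I would fix a secant line $\ell$ meeting $S$ in $k_\ell:=\#(\ell\cap S)$ points. Such a line contains $q+1$ points in total, so it passes through exactly $q+1-k_\ell$ points of $PG(N,q)\setminus S$, and it contributes its multiplicity $\binom{k_\ell}{2}$ to each of them. Hence the total weighted incidence count is $\sum_\ell (q+1-k_\ell)\binom{k_\ell}{2}$, the sum ranging over lines $\ell$ that meet $S$ in at least two points. I would split this into two pieces: $(q+1)\sum_\ell\binom{k_\ell}{2}$ and $\sum_\ell k_\ell\binom{k_\ell}{2}$. The first sum $\sum_\ell\binom{k_\ell}{2}$ simply counts pairs of points of $S$, since every pair lies on a unique line, so it equals $\binom{n}{2}$. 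For the second sum I would use the combinatorial identity $k\binom{k}{2}=3\binom{k}{3}+2\binom{k}{2}$ (equivalently $k\binom{k}{2}=\binom{k}{2}(k-2)+2\binom{k}{2}$), which lets me write $\sum_\ell k_\ell\binom{k_\ell}{2}=3\sum_\ell\binom{k_\ell}{3}+2\binom{n}{2}$. Here $\sum_\ell\binom{k_\ell}{3}$ counts collinear triples of points of $S$, which is by definition $B_3(S)$.

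Putting these together, the total weighted incidence count becomes
\begin{equation*}
(q+1)\binom{n}{2}-3B_3(S)-2\binom{n}{2}=(q-1)\binom{n}{2}-3B_3(S).
\end{equation*}
Dividing by the number $\#PG(N,q)-n$ of points outside $S$ yields
\begin{equation*}
\mu\gamma_\mu(C_S,2)=\frac{(q-1)\binom{n}{2}-3B_3(S)}{\#PG(N,q)-n}.
\end{equation*}
Since $\binom{n}{2}=\tfrac{n(n-1)}{2}$, multiplying numerator and denominator by $1/n$ gives the stated numerator $\tfrac{n-1}{2}(q-1)-\tfrac{3}{n}B_3(S)$ and denominator $\tfrac{\#PG(N,q)}{n}-1$; dividing through by $\mu$ then produces formula (\ref{eq2_uniformity density}), proving part (i). Part (ii) is then immediate: by the remark following Definition \ref{eq1_gamma(C,R)isAverage} (that $\gamma_\mu\ge1$ with equality exactly for OS sets), $S$ is a $(1,\mu)$-OS set if and only if $\gamma_\mu(C_S,2)=1$, which upon clearing the denominator in (\ref{eq2_uniformity density}) is precisely the displayed equation in (ii).

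The main obstacle I anticipate is purely bookkeeping rather than conceptual: I must handle the multiplicity convention carefully so that the weighting by $\binom{k_\ell}{2}$ is applied consistently, and I must be sure that the averaging in Proposition \ref{prop optimal (1,mu) OS}(i) is over all of $PG(N,q)\setminus S$ (every such point lies on at least one secant by condition (M3), so no point is excluded from the count). The only genuinely nontrivial step is the algebraic identity converting $\sum_\ell k_\ell\binom{k_\ell}{2}$ into a combination of $B_3(S)$ and $\binom{n}{2}$; once that is in hand, the rest is substitution and simplification.
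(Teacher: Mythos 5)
Your proof is correct and is essentially the standard argument: the paper itself states this proposition without proof (citing \cite{BDGMP-AMC2015}), and your weighted point--secant incidence double count, combined with Proposition \ref{prop optimal (1,mu) OS}(i) and the identity $k\binom{k}{2}=3\binom{k}{3}+2\binom{k}{2}$, is exactly the computation that underlies formula (\ref{eq2_uniformity density}). Part (ii) also goes through as you say, since by (M3) every term in the average is at least $\mu$, so the average equals $\mu$ precisely when every point off $S$ lies on exactly $\mu$ secants counted with multiplicity.
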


It is clear that  if $q,N,\mu $ are fixed, then the best $\mu$-density is achieved for
small $n$ and therefore, the following parameter seems to be relevant in this
context.

\begin{definition}
\label{def2} The $\mu $-length function $\ell _{\mu }(2,r,q)$
is the smallest length $n$ of a linear $(2,\mu )$-MCF code with
parameters $ [n,n-r,d]_{q}2$, $d\geq 3$, or equivalently the
smallest cardinality of a $ (1,\mu )$-saturating set in
$PG(r-1,q)$. For $\mu =1$,  $\ell _{1}(2,r,q)$ is the usual
length function $\ell (2,r,q)$
\cite{BrLyWi-Handbook,Coh,DGMP-AMC2011} for 1-fold coverings.
\end{definition}

\begin{remark}\cite{BDGMP-AMC2015}
\label{repeating} A number $\mu $ of disjoint copies of
a $1$ -saturating set in $PG(N,q)$ give rise to a $(1,\mu
)$-saturating set in $ PG(N,q)$. Therefore,
\begin{equation}
\ell _{\mu }(2,r,q)\leq \mu \ell (2,r,q).  \label{disjoint}
\end{equation}
Denote by $\gamma _{\mu }(2,r,q)$ the
minimum $\mu $-density of a linear $(2,\mu )$-MCF code of
codimension $r$ over ${\mathbb{F} }_{q}$. Let $\delta (2,r,q)$
be the minimum density of a linear code with covering radius
$2$ and codimension $r$ over ${\mathbb{F}}_{q}$, then
\begin{equation}
\gamma _{\mu }(2,r,q)\leq \frac{\frac{1}{2}(\mu \ell (2,r,q)-1)(q-1)}{\mu
\cdot (\frac{\#PG(r-1,q)}{\mu \ell (2,r,q)}-1)}-1\sim \mu \delta (2,r,q).
\label{triv1}
\end{equation}
The same inequalities clearly hold for the best known
lengths and densities, denoted, respectively, by
$\overline{\ell }_{\mu }(2,r,q),$ $ \overline{\ell }(2,r,q),$
$\overline{\gamma }_{\mu }(2,r,q)$, and $\overline{ \delta
}(2,r,q)$:
\begin{equation}
\overline{\ell }_{\mu }(2,r,q)\leq \mu \overline{\ell }(2,r,q).
\label{disjoint_known}
\end{equation}
\begin{equation}
\overline{\gamma }_{\mu }(2,r,q)
\lapprox \mu
\overline{\delta }(2,r,q).  \label{triv1_known}
\end{equation}
\end{remark}

From Equations \eqref{disjoint}--\eqref{triv1_known}, results
for parameters $\ell _{\mu }(2,r,q),$ $\overline{\ell }_{\mu
}(2,r,q),$ $\gamma _{\mu }(2,\linebreak r,q),$ and
$\overline{\gamma }_{\mu }(2,r,q),$ can be immediately obtained
from the vast body of literature on $1$-saturating sets in
finite projective spaces; see e.g.
\cite{BFMP-JG2013,Bo-Sz-Ti,BrLyWi-Handbook,Coh,Dav95,DGMP-2010,DGMP-AMC2011,GEJC,GJCD,GiulBritCombConf,GiPa,GT2004,BPFMsatu,TamasRoma}.

The aim of the present paper is to construct
$(1,\mu)$-saturating sets in $PG(N,q)$ giving rise to $(2,\mu
)$-MCF codes with cardinality and density smaller to those in Inequalities
\eqref{disjoint}--\eqref{triv1_known}.

\section{Small $(1,\mu)$-saturating sets in $PG(N,q)$, $q$ even}\label{q even}

In $PG(N,q)$, $q$ even, small $1$-saturating sets have been
constructed; see \cite{DGMP-2010,GJCD}. Roughly speaking, if
$N$ is odd, then in $PG(N,q)$, $q$ even, there are
$1$-saturating sets whose size is of the same order of
magnitude as $ 2q^{(N-1)/2}$. If $N$ is even, then there exist
$1$-saturating sets of size about $t_2(q)q^{(N-2)/2}$, where
$t_2(q)$ denotes the size of the smallest saturating set in
$PG(2,q)$. When $q$ is a square, $t_2(q)\le 3\sqrt q  - 1$
holds \cite{Dav95}.  Therefore, for $q$ a square,
\begin{equation*}
\overline{\ell}(2,r,q)\sim c(r)q^{(r-2)/2}, \text{ with } c(r)=\left\{
\begin{array}{ll}
2 & \text{ for even } r \\
3 & \text{ for odd } r
\end{array}
\right.,
\end{equation*}
and the following results on density  about the order of
magnitude of $\overline{\gamma}_{\mu}(2,r,q)$ can be easily
obtained, see \eqref{triv1_known}:
\begin{equation}  \label{triveven}
\overline{\gamma}_{\mu}(2,r,q)\sim \frac{1}{2}\mu c(r)^2.
\end{equation}

In this section we significantly improve \eqref{triveven} for
the case $\protect\mu=\frac{q-2}{2}$, see \eqref{trasl1}
and~\eqref{trasl2}.

For $i=0,\ldots, N$, let $\pi_i$ be the subset of $PG(N,q)$
defined as follows:
\begin{equation*}
\pi_i:=\{(x_0,\ldots,x_N)\mid x_0=x_1=\ldots=x_{i-1}=0, x_i\neq 0\},
\end{equation*}
where $x_0,\ldots,x_N$ are the homogeneous coordinates of a
point in $PG(N,q)$. Clearly, $PG(N,q)$ is the disjoint union of
$\pi_0\cup\pi_1\cup\ldots\cup \pi_N $; also, each $\pi_i$ with
$i<N$ can be viewed as an affine space $ AG(N-i,q)$, whereas
$\pi_N$ consists of a single point, namely $ (0,\ldots,0,1)$.

\begin{lemma}
\label{5gen} In an affine space $AG(N,q)$ with $q$ even, $q>2$, there exists
a subset $K$ of size less than or equal to
\begin{equation*}
\left\lceil \frac{1+\sqrt{4q-7}}{2}\right\rceil q^{(N-1)/2}
\end{equation*}
such that every point of $AG(N,q)\setminus K$ belongs to at least $(q-2)/2$
distinct secants of $K$.
\end{lemma}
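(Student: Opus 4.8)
The plan is to build $K$ recursively, raising the dimension two at a time, with a clean planar construction as engine and a product step that turns a good set in dimension $N-2$ into one in dimension $N$ (call a set \emph{good} if it satisfies the conclusion of the lemma in its dimension). First note the role of the constant: writing $c=\lceil\frac{1+\sqrt{4q-7}}{2}\rceil$, a direct computation shows that $c$ is the least integer with $c(c-1)\ge q-2$, i.e. with $\binom{c}{2}\ge\frac{q-2}{2}$. Thus the target is, through every external point, to exhibit at least $\binom{c}{2}$ distinct secants, produced from a configuration of ``width'' $c$.

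For the base case $N=2$ I would take the parabola $\cC=\{(x,x^2)\mid x\in\fq\}\subseteq AG(2,q)$, of size $q$. For an external point $Q=(x_0,y_0)$ the lines through $Q$ meeting $\cC$ twice correspond to slopes $m$ for which $X^{2}+mX+(y_0+mx_0)=0$ has two roots; since squaring is additive in characteristic two, the map $u\mapsto y_0u^{2}+x_0u$ is $\mathbb F_2$-linear and $\mathrm{Tr}(y_0u^{2}+x_0u)=\mathrm{Tr}((y_0^{1/2}+x_0)u)$, a non-trivial linear functional precisely when $y_0\ne x_0^{2}$, i.e. when $Q\notin\cC$. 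Counting its kernel gives exactly $\frac{q-2}{2}$ secants through each external point, and $|\cC|=q\le c\sqrt q$ since $\frac{1+\sqrt{4q-7}}{2}\ge\sqrt q$ for $q\ge4$. This settles the even tower.

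For the inductive step $N-2\to N$ with $N\ge4$, given a good $K'\subseteq AG(N-2,q)$ of size at most $cq^{(N-3)/2}$, put $K=\{(x,x^2,w)\mid x\in\fq,\ w\in K'\}$, so $|K|=q|K'|\le cq^{(N-1)/2}$. For $Q=(x_0,y_0,w_0)\notin K$ there are two cases. If $w_0\in K'$, then $Q$ lies in the plane $w=w_0$ off the parabola $K$ cuts there, and the base case yields $\frac{q-2}{2}$ secants inside that plane. If $w_0\notin K'$, then $w_0$ lies on at least $\frac{q-2}{2}$ distinct secants $m$ of $K'$; for each such $m$, choosing $w_1,w_2\in m\cap K'$ and imposing collinearity of $Q$ with points of $K$ over $w_1,w_2$ reduces, again by additivity of squaring, to a single solvable equation $\alpha^{2}=(x_0^{2}+y_0)/(s_1s_2)$ with $s_i\ne0$. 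The resulting line is a secant of $K$ through $Q$, and lines from different $m$ are distinct because they project onto the distinct lines $m$ under $(x,y,w)\mapsto w$; hence $Q$ again sees at least $\frac{q-2}{2}$ secants.

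The remaining point, and the main obstacle, is the odd base case $N=3$, where $K'$ is one-dimensional and offers only a single direction. The naive stack of $c$ parallel parabolas degenerates along the ruling $x=x_0$: every cross-secant through a point with $y_0=x_0^{2}$, $w_0\notin K'$ collapses onto that one line, so such a point sees only one distinct secant even though the $\binom{c}{2}$ pairs are present with multiplicity. I would break this by tilting the copies, replacing the $t$-th parabola by $\{(x,x^{2}+h(t)x,t)\}$ for a suitable injection $h$, destroying the common ruling; the cost is that the lift now produces an Artin--Schreier condition $\alpha^{2}+\alpha=C/(s_1s_2)$ together with a pair-dependent degeneracy locus. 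The technical heart is to show that for a generic $h$ at least $\frac{q-2}{2}$ of the $\binom{c}{2}$ cross-secants through any external point survive as \emph{distinct} lines, which calls for a careful trace/injectivity count; this is where the real work lies. With the two base cases in hand, stepping by two settles all $N$.
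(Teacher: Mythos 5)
Your construction for even $N$ is correct and is in substance the same object the paper uses: the recursive product of parabolas $\{(x,x^2,w)\}$ is an explicit translation cap of size $q^{N/2}$ in $AG(N,q)$, your trace computation in the plane is right, and the lifting argument for the step $N-2\to N$ (distinctness via projection onto the secants of $K'$) is sound. The problem is that your recursion steps by two, so every odd $N$ ultimately rests on the base case $N=3$, and that case you do not prove: you describe a candidate (tilted parabolas), correctly observe that the untilted stack genuinely fails along the ruling, and then state that showing $\frac{q-2}{2}$ of the cross-secants survive as distinct lines is ``where the real work lies.'' As it stands the lemma is therefore unproved for all odd $N$, which is half of the statement; this is a genuine gap rather than a routine omission, since, as you yourself note, the naive odd-dimensional construction really does produce external points lying on only one distinct secant once $q\ge 8$.

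For comparison, the paper handles odd $N$ differently: it takes a translation cap $K'$ in $AG(N-1,q)$ (one dimension down, not two) of size $q^{(N-1)/2}$ and forms $K=\{(P,a_i)\mid P\in K',\ i=1,\dots,s\}$ with $s=\left\lceil \frac{1+\sqrt{4q-7}}{2}\right\rceil$ parallel copies, invoking the doubling construction of Giulietti: each pair of copies $K_{i,j}$ is a complete cap, so every external point is covered by each of the $\binom{s}{2}\ge \frac{q-2}{2}$ pairs. Note that the degeneracy you identified is present in that construction too: a point $(P,b)$ with $P\in K'$ and $b\notin\{a_1,\dots,a_s\}$ lies on only one line meeting $K$ at least twice, namely the $s$-secant $\{P\}\times{\mathbb F}_q$. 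This is harmless for the paper's purposes because in the application (Theorem \ref{befana} feeding into Condition (M3)) secants are counted with multiplicity $\binom{\#(\ell\cap S)}{2}$, and that single $s$-secant already contributes $\binom{s}{2}\ge\frac{q-2}{2}$. So the efficient repair of your odd case is not to tilt the copies and fight the Artin--Schreier degeneracy locus, but to stack $s$ parallel copies of your even-dimensional set one dimension down and count secants with multiplicity, which is all that is used downstream.
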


\begin{proof}
Assume that $N$ is even. Then there exists a translation cap $K$ in $AG(N,q)$ of size $q^{N/2}$ (see (2.6) in \cite{GJCD}). By Proposition 2.5 in \cite{GJCD},
every point of $AG(N,q)\setminus K$ belongs to $(q-2)/2$ distinct secants of $K$. Then the assertion follows from
$$
\sqrt q\le \left\lceil  \frac{1+\sqrt{4q-7}}{2}\right\rceil.
$$

To deal with the case of odd dimension $N$, we set $s=\left\lceil \frac{1+\sqrt{4q-7}}{2}\right\rceil$ and we fix $s$ distinct elements $a_1,\ldots,a_s$ in $\fq$.
Note that
$$
{s \choose 2}  \ge \frac{q-2}{2}.
$$
Let $K'$ be a translation cap  in $AG(N-1,q)$ of size $q^{(N-1)/2}$. Let
$$
K=\{(P,a_i)\mid P\in K', i=1,\ldots,s\}\subset AG(N,q).
$$
By the  doubling construction (see e.g. Remark 2.14 in \cite{GJCD}), we have that for each pair of integers $i,j$ with $1\le i<j\le s$, the subset of $K$
$$
K_{i,j}=\{(P,a_i),(P,a_j)\mid P\in K'\}
$$
is a complete cap of size $q^{(N-1)/2}$ in $AG(N,q)$.

Therefore, every point of $AG(N,q)\setminus K$ is covered by at least ${s\choose 2}\ge (q-2)/2$ secants of $K$.
\end{proof}

A slight improvement of Lemma \ref{5gen} can be obtained when $q$ is a
square and there exists a translation cap of size $q^{3/2}$ in $AG(3,q)$.

\begin{lemma}
\label{5genbis} Assume that $q$ is a square and there exists a
translation cap of size $q^{3/2}$ in $AG(3,q)$. Then in an
affine space $AG(N,q)$ with $ q $ even, $N>2$, there exists a
subset $K$ of size less than or equal to $ q^{N/2}$ such that
every point of $AG(N,q)\setminus K$ belongs to at least $
(q-2)/2$ distinct secants of $K$.
\end{lemma}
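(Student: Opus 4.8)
The plan is to split on the parity of $N$ and to use, for odd $N$, the hypothesised translation cap in $AG(3,q)$ as the base of a recursive cap construction. For $N$ even the required set is already available: the translation cap $K$ of size $q^{N/2}$ furnished by (2.6) of \cite{GJCD} meets the size bound $q^{N/2}$, and Proposition 2.5 of \cite{GJCD} guarantees that every point of $AG(N,q)\setminus K$ lies on at least $(q-2)/2$ secants of $K$, so nothing new is needed here. Thus the whole content of the lemma lies in the odd case, which is precisely where Lemma \ref{5gen} was forced to pay the extra factor $\left\lceil\frac{1+\sqrt{4q-7}}{2}\right\rceil/\sqrt q$ in the size.

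For $N$ odd, $N>2$, I would write $N=3+(N-3)$ with $N-3$ even and nonnegative. Starting from the assumed translation cap $T$ of size $q^{3/2}$ in $AG(3,q)$ and a translation cap of size $q^{(N-3)/2}$ in $AG(N-3,q)$ (again from (2.6) of \cite{GJCD}), I would form their product to obtain a cap $K$ in $AG(N,q)$ of size $q^{3/2}\cdot q^{(N-3)/2}=q^{N/2}$. Equivalently, one may climb from $AG(3,q)$ to $AG(N,q)$ by iterating a single two-dimensional product step, multiplying the size by $q$ and the dimension by $2$ at each stage, using a translation cap of size $q$ in $AG(2,q)$ (e.g. an affine conic) as the repeated factor. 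Either way the output has the prescribed size $q^{N/2}$.

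It then remains to count secants through an external point. If $K$ is again a translation cap of size $q^{N/2}$, the secant count is governed by the same mechanism as in the even case: there are $\binom{q^{N/2}}{2}=\frac12\left(q^{N}-q^{N/2}\right)$ secants, each carrying $q-2$ points outside $K$, and there are $q^{N}-q^{N/2}$ external points, so the average number of secants per external point equals $\frac{q-2}{2}$. The translation structure forces this average to be attained uniformly, which is exactly the statement I would invoke from Proposition 2.5 of \cite{GJCD}; I expect its proof to depend only on the translation-cap property and on the value $q^{N/2}$ of the size, not on the parity of $N$. This yields the claimed lower bound $(q-2)/2$ on the number of secants through every point of $AG(N,q)\setminus K$.

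The main obstacle is not the size bookkeeping but securing the two structural facts about the product cap: that it is a genuine cap (no three collinear points arise across the two factors) and that it inherits the translation-cap property, so that Proposition 2.5 of \cite{GJCD} applies to it. The first is the standard delicate point in product constructions of caps and must be checked against the precise additive structure of $T$; the second is what upgrades the mere average count to a uniform lower bound valid at every external point. Once the product construction for translation caps is in place, in the spirit of the doubling construction of Remark 2.14 in \cite{GJCD}, the remainder is routine.
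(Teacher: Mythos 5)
Your proposal is correct and follows essentially the same route as the paper: handle even $N$ exactly as in Lemma \ref{5gen}, and for odd $N$ build a translation cap of size $q^{N/2}$ in $AG(N,q)$ by taking the product of the hypothesised translation cap of size $q^{3/2}$ in $AG(3,q)$ with a translation cap in $AG(N-3,q)$, then invoke the secant count for maximal translation caps. The two structural facts you flag (the product of translation caps is a translation cap, and the uniform secant count) are precisely what the paper delegates to Propositions 2.8 and 2.5 of \cite{GJCD}.
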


\begin{proof}
The assertion for $N$ even follows from the proof of Lemma \ref{5gen}.
Here it is possible to construct a translation cap in $AG(N,q)$ of size $q^{N/2}$ also for odd $N>1$ by using Proposition 2.8 in \cite{GJCD}.
\end{proof}

\begin{remark}
\label{rem5gen} The hypothesis of Lemma \ref{5genbis} are satisfied for
instance for $q=16$ (see \cite[Lemma 3.1]{GiPa}).
\end{remark}

Consider the partition
\begin{equation*}
PG(N,q)=\pi_0\cup \pi_2\cup\ldots \cup \pi_{N-1}\cup \{(0,\ldots,0,1)\}.
\end{equation*}
As each $\pi_i$ is an $AG(N-i,q)$, by Lemma \ref{5gen} we are
able to find sets $K_i$ contained in $\pi_i$, of size at most
\begin{equation*}
\left\lceil \frac{1+\sqrt{4q-7}}{2}\right\rceil q^{(N-i-1)/2},
\end{equation*}
and such that each point of $\pi_i\setminus K_i$ belongs at least $(q-2)/2$
distinct secants of $K_i$.

Then the following result is obtained by considering the union
of the $K_i$ 's, together with the point \{(0,\ldots,0,1)\}.

\begin{theorem}
\label{befana} In a projective space $PG(N,q)$ with $q$ even,
$q>2$, there exists a subset $S$ of size equal to
\begin{equation*}
1+\left\lceil \frac{1+\sqrt{4q-7}}{2}\right\rceil (q^{\frac{N-1}{2}}+q^{
\frac{N-2}{2}}+\ldots+q^{\frac{1}{2}}+1)
\end{equation*}
such that every point of $PG(N,q)\setminus S$ belongs to at
least $(q-2)/2$ distinct secants of~$S$.

If in addition $q$ is a square and there exists a translation
cap of size $ q^{3/2}$ in $AG(3,q)$, then $S$ can be chosen in
such a way that
\begin{equation*}
|S|= q^{\frac{N}{2}}+q^{\frac{N-1}{2}}+q^{\frac{N-2}{2}}+\ldots+q+q^{\frac{1
}{2}}+1=\frac{q^{(N+1)/2}-1}{q^{1/2}-1}.
\end{equation*}
\end{theorem}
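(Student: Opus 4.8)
The plan is to build $S$ cell by cell from the affine stratification $PG(N,q)=\pi_0\cup\pi_1\cup\cdots\cup\pi_{N-1}\cup\{(0,\ldots,0,1)\}$ recalled above, in which $\pi_i\cong AG(N-i,q)$ and $\pi_N=\{(0,\ldots,0,1)\}$ is a single point. For $i=0,\ldots,N-1$ I would apply Lemma~\ref{5gen} inside $\pi_i$ to obtain $K_i\subseteq\pi_i$ of size at most $\lceil\frac{1+\sqrt{4q-7}}{2}\rceil q^{(N-i-1)/2}$ such that every point of $\pi_i\setminus K_i$ lies on at least $(q-2)/2$ secants of $K_i$. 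Since $\pi_N$ is a single point it can lie on no secant and so must simply be adjoined; this accounts for the isolated summand $1$ in the target cardinality. I then set $S=K_0\cup\cdots\cup K_{N-1}\cup\{(0,\ldots,0,1)\}$. The covering claim for a generic point is immediate: any $Q\in PG(N,q)\setminus S$ lies in some cell $\pi_i$ with $i\le N-1$ and $Q\notin K_i$, hence sits on at least $(q-2)/2$ secants of $K_i\subseteq S$, which are in particular secants of $S$ through $Q$.

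For the size I would first upgrade the inequalities of Lemma~\ref{5gen} to exact equalities by \emph{padding}: enlarging any $K_i$ only increases the number of secants through an external point and only shrinks the set still to be covered, so I may freely add points of $\pi_i$ (there is ample room, as $|\pi_i|=q^{N-i}$) until $|K_i|$ equals $\lceil\frac{1+\sqrt{4q-7}}{2}\rceil q^{(N-i-1)/2}$. The cells being pairwise disjoint, $|S|=1+\sum_{i=0}^{N-1}|K_i|$, and the geometric sum $\sum_{i=0}^{N-1}q^{(N-i-1)/2}=q^{(N-1)/2}+\cdots+q^{1/2}+1$ reproduces the announced value. For the improved bound I would run the same argument with Lemma~\ref{5genbis} in place of Lemma~\ref{5gen}, giving $|K_i|=q^{(N-i)/2}$, so that $|S|=1+\sum_{i=0}^{N-1}q^{(N-i)/2}=\sum_{k=0}^{N}q^{k/2}=\frac{q^{(N+1)/2}-1}{q^{1/2}-1}$.

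The hard part will be the lowest-dimensional cell $\pi_{N-1}\cong AG(1,q)$, whose points are all collinear: they lie on the single projective line $L=\pi_{N-1}\cup\pi_N$, so within $\pi_{N-1}$ they meet only one secant and the within-cell count of Lemma~\ref{5gen} degenerates. Here I would count the secants through such a point with multiplicity, as in condition (M3) defining a $(1,\mu)$-saturating set: the line $L$ carries $|K_{N-1}|+1$ points of $S$, the extra one being precisely the adjoined point $(0,\ldots,0,1)$, so its multiplicity is $\binom{|K_{N-1}|+1}{2}\ge\binom{s}{2}\ge(q-2)/2$ with $s=\lceil\frac{1+\sqrt{4q-7}}{2}\rceil$, the last inequality being the one already invoked in the proof of Lemma~\ref{5gen}. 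This is also where the improved construction must be checked by hand, since Lemma~\ref{5genbis} covers only cells of dimension greater than $2$; the remaining cells $AG(2,q)$ and $AG(1,q)$ are dealt with respectively by a translation cap of size $q$ and by the multiplicity argument just described. Once this degenerate cell is disposed of, the union $S$ has the stated size and covers every external point, completing the proof.
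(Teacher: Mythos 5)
Your proposal is correct and follows the paper's own proof essentially verbatim: the same stratification $PG(N,q)=\pi_0\cup\pi_1\cup\cdots\cup\pi_{N-1}\cup\{(0,\ldots,0,1)\}$, the same cell-by-cell application of Lemma \ref{5gen} (respectively Lemma \ref{5genbis} for the improved bound), and the same conclusion by taking the union of the $K_i$ together with the extra point; the padding remark needed to turn ``size at most'' into ``size equal to'' is routine and correct. The one place you go beyond the paper is your separate treatment of the bottom cell $\pi_{N-1}\cong AG(1,q)$, and this is a genuine (and welcome) addition: the paper simply invokes Lemma \ref{5gen} uniformly, including for $AG(1,q)$, where all points lie on a single line and the conclusion about $(q-2)/2$ \emph{distinct} secants cannot hold once $q\ge 8$; your count $\binom{|K_{N-1}|+1}{2}\ge\binom{s}{2}\ge\frac{q-2}{2}$ on the line $L=\pi_{N-1}\cup\pi_N$ is exactly what the application to $\left(1,\frac{q-2}{2}\right)$-saturating sets requires, since condition (M3) counts secants with multiplicity. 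Just be aware that this patch establishes the multiplicity version of the covering claim for the points of $\pi_{N-1}\setminus K_{N-1}$ rather than the literal ``distinct secants'' wording of the statement --- a caveat that applies equally to the paper's own argument, not only to yours.
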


 As for the density, we have that if
$C(N,q)$ is the code corresponding to the set $S$ of Theorem
\ref{befana}, then by Proposition \ref{prop optimal (1,mu)
OSbis}(i),
\begin{equation*}
\gamma_{\frac{q-2}{2}}(C(N,q),2)\le\frac{\frac{|S|-1}{2}(q-1)}{\frac{q-2}{2}(\frac{
|PG(N,q)|}{|S|}-1)}=\frac{q-1}{q-2} \cdot \frac{|S|(|S|-1)}{|PG(N,q)|-|S|},
\end{equation*}
whence
\begin{equation}  \label{trasl1}
\gamma_{\frac{q-2}{2}}(C(N,q),2)<\frac{q-1}{q-2}\cdot \frac{q^N+10q^{N-\frac{1}{2}
}+25q^{N-1}} {q^{N}+q^{N-1}+\ldots+q+1-q^{\frac{N}{2}}-5q^{\frac{N-1}{2}}}.
\end{equation}
Interestingly, if we fix codimension $N$ and let $q$ vary, we
get that
\begin{equation*}
\lim_{q\to \infty}\gamma_{\frac{q-2}{2}}(C(N,q),2)=1.
\end{equation*}

The situation is even more interesting if the hypothesis of
Lemma \ref {5genbis} are satisfied. In this case we obtain the
following very nice formula for
$\gamma_{\frac{q-2}{2}}(C(N,q),2)$, which is independent
of $N$:
\begin{equation}  \label{trasl2}
\gamma_{\frac{q-2}{2}}(C(N,q),2)=\frac{q-1}{q-2}\cdot\frac{\frac{q^{(N+1)/2}-1}{
q^{1/2}-1}-1}{\frac{q^{(N+1)/2}+1}{q^{1/2}+1}-1}=\frac{q-1}{q-2}\cdot \frac{
\sqrt q+1}{\sqrt q-1}=\frac{(\sqrt q+1)^2}{q-2}.
\end{equation}

For instance for $q=16$ we have $|S|=\frac{4^{N+1}-1}{3}$, and
we get that $\gamma_{\frac{q-2}{2}}(C(N,q),2)$ is equal to
$25/14$ independently of $N$.


\section{Perfect and almost perfect $(2,\mu)$-MCF codes from
classical geometrical objects in
$PG(N,q)$}\label{sec_classic_object} In this section we
construct optimal $ (1,\mu )$-saturating $n$-sets ($ (1,\mu
)$-OS $n$-sets) in $PG(N,q)$. Recall that an $[n,n-(N+1),d(C)]$
code $C$ corresponding to a $ (1,\mu )$-OS set is an almost
perfect $(2,\mu)$-MCF code (APMCF code) if $d(C)=3$ or perfect
$(2,\mu)$-MCF code (PMCF code) if $d(C)=4$, see Sections
\ref{secDefinCodTheor} and \ref{seztre}. The $\mu$-density of any APMCF or  PMCF code $C$ is $\gamma _{\mu }(C,2)=1$.

In Proposition \ref{th_oval} we obtain  MCF codes $C$ with
 $\mu$-density $\gamma _{\mu }(C,2)=1+\frac{1}{q}$.

\begin{proposition}
\label{th7_(n,s)arc}Let $q=2^{v}$ be even. Let $s=2^{k}$, $1\leq k\leq v.$
 Finally, let $n=(s-1)q+s$ and let
$\mathcal{K}$ be a maximal $(n,s)$-arc in $PG(2,q)$.  Then $\mathcal{K}$ is a $ (1,\mu )$-OS $n$-set with
      parameters
\begin{equation*}
n=(s-1)q+s,~~\mu =\frac{1}{2}(s-1)n.
\end{equation*}
An $[n,n-3,d(C_{\mathcal{K}})]_{q}2$ code $C_{\mathcal{K}}$
corresponding to $\mathcal{K}$\ is a $\left( 2,\mu\right)
$-PMCF code if $s=2$ and a $\left( 2,\mu\right)$-APMCF code
 if $s\geq 4.$

\end{proposition}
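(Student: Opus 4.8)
The plan is to check directly that $\mathcal{K}$ fulfils Conditions (M1)--(M3) with the exact secant count characterising a $(1,\mu)$-OS set, and then to compute the minimum distance $d(C_{\mathcal{K}})$ to separate the perfect from the almost perfect case. Throughout I would use the single defining property of a maximal $(n,s)$-arc: every line of $PG(2,q)$ meets $\mathcal{K}$ either in $0$ points (external line) or in exactly $s$ points (secant), there being no tangents, and $\#\mathcal{K}=n=(s-1)q+s$. Condition (M1) is then immediate, since $n\geq q+2>q+1$ forces $\mathcal{K}$ not to lie on a line, so it spans $PG(2,q)$; and (M2) holds because $s=2^{k}\leq 2^{v}=q$ gives $n\leq q^{2}<q^{2}+q+1=\#PG(2,q)$, so some point lies off $\mathcal{K}$.

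The core step is the uniform secant count. Fix any $Q\in PG(2,q)\setminus\mathcal{K}$. The $q+1$ lines through $Q$ partition the $n$ points of $\mathcal{K}$, and each such line is either external or a secant containing exactly $s$ points; hence the number of secants through $Q$ is $n/s$, which is a genuine integer because $s=2^{k}$ divides $q=2^{v}$ and therefore $s\mid n$. Each secant has multiplicity $\binom{s}{2}$, so the number of secants through $Q$ counted with multiplicity equals
\[
\frac{n}{s}\binom{s}{2}=\frac{n}{s}\cdot\frac{s(s-1)}{2}=\frac{(s-1)\,n}{2}=\mu,
\]
a value independent of the chosen $Q$. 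In particular every external point lies on exactly $\mu$ secants counted with multiplicity, so $\mathcal{K}$ is a $(1,\mu)$-saturating set, and by Proposition \ref{prop optimal (1,mu) OS}(ii) it is in fact a $(1,\mu)$-OS set with $\mu=\tfrac12(s-1)n$.

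It remains to read off $d(C_{\mathcal{K}})$, the least number of linearly dependent columns of the $3\times n$ parity-check matrix whose columns are the points of $\mathcal{K}$. Since distinct projective points give pairwise independent columns, $d\geq 3$ always. If $s\geq 4$, any secant already supplies $s\geq 3$ collinear points of $\mathcal{K}$, hence three dependent columns, so $d=3$; combined with the $(1,\mu)$-OS property, the consequence of \cite[Proposition 3.6]{BDGMP-AMC2015} recalled in Section \ref{secDefinCodTheor} yields a $(2,\mu)$-APMCF code. If $s=2$, then $\mathcal{K}$ is a hyperoval, so no three of its points are collinear and $d\geq 4$; as any four vectors of $\fq^{3}$ are dependent we also have $d\leq 4$, whence $d=4=2R$ and the same result upgrades the code to a $(2,\mu)$-PMCF code. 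In both cases $\gamma_{\mu}(C_{\mathcal{K}},2)=1$.

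I do not expect a serious obstacle here: the entire argument rests on the no-tangent property, which alone forces every external point to see the same weighted number of secants, so that the OS condition of Proposition \ref{prop optimal (1,mu) OS}(ii) holds automatically rather than needing to be engineered. The only points requiring a little care are the divisibility $s\mid n$ (so that $n/s$ is truly the number of secants through $Q$) and the minimum-distance computation in the borderline hyperoval case $s=2$, where one must use both that a hyperoval has no three collinear points and that columns in $\fq^{3}$ cannot be four-wise independent.
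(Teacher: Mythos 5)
Your proof is correct and follows essentially the same route as the paper: the no-tangent property of a maximal $(n,s)$-arc gives exactly $n/s$ secants through each external point, each of multiplicity $\binom{s}{2}$, yielding $\mu=\frac{1}{2}(s-1)n$ uniformly, with the $s=2$ hyperoval case giving $d=4$ and hence a PMCF code. You merely spell out the details (conditions (M1)--(M2), the divisibility $s\mid n$, and the $d=3$ case for $s\geq 4$) that the paper leaves implicit.
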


\proof
Every line of $PG(2,q)$ meets a maximal
            $(n,s)$-arc
either in zero or in $s$ points whence it follows that
every point of $PG(2,q)$ outside the arc lies on
$\frac{n}{s}$ $s$-secant. Therefore, $R=2$ and $\mu
=\binom{s }{2}\frac{n}{s}=\frac{1}{2}(s-1)n.$  For $s=2$
the minimum distance $d(C_{\mathcal{K}})$ is $4$ as in this case the arc $\mathcal{K}$ is a hyperoval.

\endproof

\begin{proposition}
\label{th_elliptic_quadric}An elliptic quadric $\mathcal{Q}$ in
$PG(3,q)$ is a $(1,\mu )$-OS $n$-set with
\begin{equation*}
n=q^{2}+1,~~\mu = \frac{1}{2}(q^{2}-q).
\end{equation*}
A $[n,n-4,4]_{q}2$ code $C_{\mathcal{Q}}$ corresponding to
$\mathcal{Q}$ is a $(2, \mu)$-PMCF code.
\end{proposition}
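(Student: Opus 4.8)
The plan is to prove everything by first showing, for an arbitrary point $Q$ outside the quadric, that the number of secants of $\mathcal{Q}$ through $Q$ is exactly $\tfrac12(q^2-q)$, and then reading off the code-theoretic conclusions from the results of Sections \ref{secDefinCodTheor}--\ref{seztre}. First I would recall the standard properties of the elliptic quadric as an ovoid: it has $q^2+1$ points, no three of which are collinear, and every plane of $PG(3,q)$ meets it either in a single point (a tangent plane) or in a $(q+1)$-arc (a secant plane). The absence of three collinear points gives at once that each secant has multiplicity $\binom{2}{2}=1$, so in (M3) secants are simply counted once; it also shows $\mathcal{Q}$ lies in no plane, so $S=\mathcal{Q}$ spans $PG(3,q)$ (condition (M1)) and the corresponding parity-check matrix has rank $4$, giving codimension $4$. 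Condition (M2) is clear since $q^2+1<\#PG(3,q)$.

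Next I would fix $Q\in PG(3,q)\setminus\mathcal{Q}$ and let $s,t$ be the numbers of secant and tangent lines through $Q$. Counting the $q^2+1$ points of $\mathcal{Q}$ along the lines through $Q$ gives $2s+t=q^2+1$, so the whole proposition reduces to the single identity $t=q+1$. This is the step I expect to be the main obstacle, as it must hold uniformly in $q$, and in particular for $q$ even, where the orthogonal polarity degenerates. I would establish it by a double count over the $q^2+q+1$ planes through $Q$. Writing $a$ and $b$ for the numbers of tangent and secant planes through $Q$, one has $a+b=q^2+q+1$; counting incidences $(\pi,P)$ with $Q\in\pi$ and $P\in\mathcal{Q}\cap\pi$, first by planes and then by points (each $P\in\mathcal{Q}$ lying in the $q+1$ planes through the line $QP$), gives $a+(q+1)b=(q^2+1)(q+1)$, whence $b=q^2$ and $a=q+1$. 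It then remains to match tangent lines through $Q$ with tangent planes through $Q$: every tangent line is tangent at a unique point $P$ and lies in the unique tangent plane $\pi_P$ at $P$, while conversely a tangent plane $\pi$ through $Q$ meets $\mathcal{Q}$ in one point $P$, so that $QP\subset\pi$ is the unique tangent line through $Q$ contained in $\pi$. This bijection yields $t=a=q+1$, independent of $Q$, and therefore $s=\tfrac12(q^2-q)$ for every external point.

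Consequently $\mathcal{Q}$ is a $(1,\mu)$-saturating set with $\mu=\tfrac12(q^2-q)$ in which every point off $\mathcal{Q}$ lies on exactly $\mu$ secants; by Proposition \ref{prop optimal (1,mu) OS}(ii) this is precisely the statement that $\mathcal{Q}$ is a $(1,\mu)$-OS set. As a consistency check one may instead invoke Proposition \ref{prop optimal (1,mu) OSbis}: since $B_3(\mathcal{Q})=0$ and $\#PG(3,q)/n=q+1$, the expression in \eqref{eq2_uniformity density} evaluates to $1$. Finally I would determine the code parameters: no three collinear points force $d(C_{\mathcal{Q}})\ge 4$, while four of the $q+1$ points cut out on a secant plane are coplanar, hence linearly dependent, giving $d(C_{\mathcal{Q}})\le 4$; thus $d(C_{\mathcal{Q}})=4=2R$. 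Being the code of a $(1,\mu)$-OS set with $d=2R$, $C_{\mathcal{Q}}$ is a $(2,\mu)$-PMCF code with $\gamma_\mu(C_{\mathcal{Q}},2)=1$, as recorded in Section \ref{seztre}.
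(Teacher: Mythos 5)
Your proof is correct and follows the same route as the paper, whose entire argument is the single assertion that every point off the quadric lies on $\frac{1}{2}(q^{2}-q)$ bisecants. You simply supply the standard ovoid counting (the plane double count giving $q+1$ tangent planes, hence $q+1$ tangent lines, through an external point) that the paper takes as known, together with the routine verification of (M1), (M2), the multiplicities, and $d(C_{\mathcal{Q}})=4$.
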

\proof Every point outside of the elliptic quadric
$\mathcal{Q}$ lies on $ \frac{1}{2}(q^{2}-q)$ bisecants.
\endproof

\begin{proposition}
\label{th_Hermit}Let $q$ be square. A Hermitian curve
$\mathcal{H}$ in $ PG(2,q)$ is a $(1,\mu )$-OS $n$-set with
parameters
\begin{equation*}
n=q\sqrt{q}+1,~~\mu
=\frac{1}{2}(q^{2}-q).
\end{equation*}   An $[n,n
-3,3]_{q}2$ code $C_{\mathcal{H}}$ corresponding to $
\mathcal{H}$ is a $(2,\mu)$-APMCF code.
\end{proposition}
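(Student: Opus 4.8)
The plan is to read off the combinatorial structure of the classical Hermitian curve (the Hermitian unital) in $PG(2,q)$, $q$ a square, and to push it through the optimality criterion of Proposition \ref{prop optimal (1,mu) OS}(ii). Recall from the classical theory (see e.g.\ \cite{Hirs1998}) that $\mathcal{H}$ has exactly $q\sqrt q+1$ points and that every line of $PG(2,q)$ meets $\mathcal{H}$ either in one point (a tangent) or in precisely $\sqrt q+1$ points (a secant); there are no other intersection sizes. Conditions (M1) and (M2) of Definition \ref{def_rho_mu_sat} are then immediate: $\mathcal{H}$ has more than $q+1$ points, so it is not contained in a line and therefore generates $PG(2,q)$, and $q\sqrt q+1<\#PG(2,q)$ forces the existence of points outside $\mathcal{H}$. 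The real work is to verify (M3) in its sharp form, namely that every external point lies on exactly $\mu=\frac12(q^2-q)$ secants counted with multiplicity.

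First I would fix $Q\in PG(2,q)\setminus\mathcal{H}$ and let $s_Q$, $t_Q$ be the numbers of secants and tangents through $Q$. Because $Q\notin\mathcal{H}$, the $q+1$ lines through $Q$ partition the points of $\mathcal{H}$, which yields the two identities
\begin{equation*}
s_Q+t_Q=q+1,\qquad (\sqrt q+1)\,s_Q+t_Q=q\sqrt q+1.
\end{equation*}
Subtracting gives $\sqrt q\,s_Q=q\sqrt q-q$, hence $s_Q=q-\sqrt q=\sqrt q(\sqrt q-1)$, a number that does not depend on the chosen $Q$. This uniformity is the heart of the matter and is exactly what the two-valued line-spectrum of $\mathcal{H}$ delivers.

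Next I would incorporate the multiplicities prescribed by (M3). Every secant meets $\mathcal{H}$ in exactly $\sqrt q+1$ points, so each secant through $Q$ carries the same weight $\binom{\sqrt q+1}{2}$, and the weighted count of secants through $Q$ is
\begin{equation*}
s_Q\binom{\sqrt q+1}{2}=\sqrt q(\sqrt q-1)\cdot\frac{\sqrt q(\sqrt q+1)}{2}=\frac{q(q-1)}{2}=\mu,
\end{equation*}
independently of $Q$. By Proposition \ref{prop optimal (1,mu) OS}(ii) this shows $\mathcal{H}$ is a $(1,\mu)$-OS set, and by the discussion following Definition \ref{def_rho_mu_sat} the corresponding code $C_{\mathcal{H}}$ is an APMCF (or PMCF) code with $\gamma_\mu(C_{\mathcal{H}},2)=1$.

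It remains to fix the minimum distance and thereby decide which of the two. Since distinct points of $\mathcal{H}$ give non-proportional columns of the parity-check matrix, $d(C_{\mathcal{H}})\ge 3$; and since any secant contains $\sqrt q+1\ge 3$ collinear points of $\mathcal{H}$ (here $q\ge 4$), three columns are linearly dependent and $d(C_{\mathcal{H}})=3$. As $3<4=2R$, the code is APMCF but not PMCF, which is the assertion. I do not expect a genuine obstacle: the only step where the argument could fail is the constancy of $s_Q$, and that is guaranteed as soon as one knows the Hermitian curve has no intersection sizes other than $1$ and $\sqrt q+1$; everything else is a short double count.
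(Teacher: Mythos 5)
Your proof is correct and follows essentially the same route as the paper: the paper simply quotes that each external point lies on $q-\sqrt{q}$ secants (each a $(\sqrt{q}+1)$-secant) and $\sqrt{q}+1$ tangents and computes $\mu=(q-\sqrt{q})\binom{\sqrt{q}+1}{2}=\frac{1}{2}(q^2-q)$, whereas you additionally derive the secant count from the two-valued line spectrum by a short double count and verify $d=3$. Both are the same argument in substance.
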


\proof
 Every point outside of the Hermitian curve lies
on $q-\sqrt{q}$ lines that are $(\sqrt{q}+1)$-secants and on $\sqrt{q}+1$ tangent lines to $\mathcal{H}$. Therefore $\mu =(q-\sqrt{q})\binom{
\sqrt{q}+1}{2}=\frac{1}{2}(q^{2}-q)$.
\endproof

\begin{proposition}
\label{th_BaerSubplane}Let $q$ be square. A Baer subplane $\mathcal{B}$  in $ PG(2,q)$  is a $(1,\mu
      )$-OS $n$-set with parameters
      \begin{equation*}
n=q+\sqrt{q}+1,~~\mu =\frac{1}{2}(q+\sqrt{q}).
\end{equation*}
       An $[n,n-3,3]_{q}2$ code $C_{\mathcal{B}}$
      corresponding to $\mathcal{B}$ is a $(2,\mu)$-APMCF
      code.

\end{proposition}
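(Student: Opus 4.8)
The plan is to follow exactly the pattern of the three preceding propositions: for a fixed point $Q\in PG(2,q)\setminus\mathcal{B}$, determine how the $q+1$ lines through $Q$ distribute according to their intersection size with $\mathcal{B}$, read off the total secant-multiplicity at $Q$, and then apply the $(1,\mu)$-OS characterization of Proposition \ref{prop optimal (1,mu) OS}(ii). The single geometric ingredient I would invoke is the classical incidence structure of a Baer subplane (see \cite{Hirs1998}): since $\mathcal{B}\cong PG(2,\sqrt q)$, it has $n=q+\sqrt q+1$ points and the same number of \emph{subplane lines}, each of which extends to a $(\sqrt q+1)$-secant of $\mathcal{B}$ in $PG(2,q)$; moreover $\mathcal{B}$ is a (minimal) blocking set, so every line of $PG(2,q)$ meets it in either $1$ or $\sqrt q+1$ points, and there are no external lines.

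The heart of the argument is a one-line double count at $Q$. Writing $x$ for the number of $(\sqrt q+1)$-secants through $Q$ and $q+1-x$ for the number of tangents, and using that each of the $n=q+\sqrt q+1$ points of $\mathcal{B}$ lies on exactly one of the lines through $Q$, I would solve
$$
x(\sqrt q+1)+(q+1-x)=q+\sqrt q+1,
$$
which forces $x=1$. Hence every external point lies on a unique $(\sqrt q+1)$-secant together with $q$ tangents. As a tangent has multiplicity $\binom{1}{2}=0$ while the long secant has multiplicity $\binom{\sqrt q+1}{2}=\tfrac12(q+\sqrt q)$, each $Q\in PG(2,q)\setminus\mathcal{B}$ lies on exactly $\mu=\tfrac12(q+\sqrt q)$ secants counted with multiplicity. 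Conditions (M1) and (M2) are immediate ($\mathcal{B}$ spans the plane and is a proper subset), so by Proposition \ref{prop optimal (1,mu) OS}(ii) this uniform count is precisely the statement that $\mathcal{B}$ is a $(1,\mu)$-OS $n$-set.

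For the code, the OS-to-APMCF correspondence recalled in \cite{BDGMP-AMC2015} (following Definition \ref{def_rho_mu_sat}) gives at once that $C_{\mathcal{B}}$ is a $(2,\mu)$-APMCF code with $\gamma_\mu(C_{\mathcal{B}},2)=1$; it only remains to pin down $d(C_{\mathcal{B}})$. Since the minimum distance equals the least number of linearly dependent columns of the parity-check matrix, i.e.\ the least number of collinear points of $\mathcal{B}$, and any subplane line supplies $\sqrt q+1\ge 3$ collinear points, I get $d(C_{\mathcal{B}})=3$, confirming that the code is almost perfect (and not perfect). I expect no real obstacle here: the entire proof is elementary counting, and the only substantive input is the blocking-set/intersection property of Baer subplanes, which is standard.
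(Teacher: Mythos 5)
Your proof is correct and follows essentially the same route as the paper's: the paper simply asserts that every external point lies on exactly one $(\sqrt q+1)$-secant and $q$ tangents and reads off $\mu=\binom{\sqrt q+1}{2}=\frac12(q+\sqrt q)$, while you additionally supply the short double count justifying that incidence fact and the verification that $d(C_{\mathcal{B}})=3$. No discrepancies.
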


\proof
 Here $\mu
      =\binom{\sqrt{q}+1}{2}=\frac{1}{2}(q+\sqrt{q})$ as
      every point outside the Baer subplane lies exactly on
      one $(\sqrt{q}+1)$-secant of the subplane and on $q$
      tangents to $\mathcal{B}$.
\endproof

\begin{proposition}\label{TheoremComplement}
Let $\mathcal{S}\subset PG(N,q)$ be a set such that through each point $P \in \mathcal{S}$ the number of $i$-secants of $\mathcal{S}$ is a fixed integer $x_i$. Then $PG(N,q)\setminus \mathcal{S}$ is a $(1,\mu)$-OS $n$-set with 

$$n=\frac{q^{N+1}-1}{q-1}-|\mathcal{S}|,~~\mu = \sum_{i=1}^{\frac{q^{N}-1}{q-1}} x_i \binom{q+1-i}{2}.$$
\end{proposition}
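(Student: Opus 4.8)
The plan is to reduce everything to the characterization in Proposition \ref{prop optimal (1,mu) OS}(ii): writing $\mathcal{C}:=PG(N,q)\setminus\mathcal{S}$, the set $\mathcal{C}$ is a $(1,\mu)$-OS set precisely when it satisfies (M1) and (M2) and every point lying outside $\mathcal{C}$---that is, every point $P\in\mathcal{S}$---lies on exactly $\mu$ secants of $\mathcal{C}$, counted with multiplicity. The size is immediate: $n=|\mathcal{C}|=|PG(N,q)|-|\mathcal{S}|=\frac{q^{N+1}-1}{q-1}-|\mathcal{S}|$, as claimed. So the whole content is a single exact count of secants through a fixed $P\in\mathcal{S}$.

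The heart of the argument is a complementary line-count at a fixed $P\in\mathcal{S}$. First I would partition the lines through $P$ according to how they meet $\mathcal{S}$: a line $\ell\ni P$ is an $i$-secant of $\mathcal{S}$, with $i\geq 1$ since $P\in\mathcal{S}$, and by hypothesis the number of such lines is the fixed integer $x_i$, independent of $P$. The key observation is that each such line meets the complement in $|\ell\cap\mathcal{C}|=(q+1)-i$ points, so its multiplicity as a secant of $\mathcal{C}$ is $\binom{q+1-i}{2}$. Summing over all lines through $P$ gives
\begin{equation*}
\sum_{\ell\ni P}\binom{|\ell\cap\mathcal{C}|}{2}=\sum_{i\geq 1}x_i\binom{q+1-i}{2}=\mu,
\end{equation*}
which is exactly the stated value and, crucially, does not depend on the choice of $P\in\mathcal{S}$. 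The terms with $i\in\{q,q+1\}$ vanish and $x_i=0$ for $i>q+1$, so the upper summation limit $\frac{q^N-1}{q-1}$ in the statement is simply a safe bound on the number of lines through $P$ (one could equally well stop at $i=q-1$). This uniform count is precisely condition (M3) with equality, hence the optimality.

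It remains to dispose of (M1) and (M2). Condition (M2) only asks $\mathcal{C}\neq PG(N,q)$, i.e. $\mathcal{S}\neq\emptyset$, which I take as an implicit nondegeneracy assumption. For (M1) I would show that $\mu\geq 1$ already forces $\mathcal{C}$ to generate $PG(N,q)$: if $\mathcal{C}$ were contained in a hyperplane $H$, then $PG(N,q)\setminus H\subseteq\mathcal{S}$, and for any point $P$ of $PG(N,q)\setminus H$ every line through $P$ would meet $\mathcal{C}\subseteq H$ in at most one point, contributing zero secants with multiplicity and so giving $\mu=0$. Thus the computation above simultaneously verifies (M3) and excludes the degenerate spanning failure, and Proposition \ref{prop optimal (1,mu) OS}(ii) yields that $\mathcal{C}$ is a $(1,\mu)$-OS $n$-set. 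I do not anticipate a serious obstacle here: the argument is a direct complementary double-count, and the only steps needing care are the translation $|\ell\cap\mathcal{S}|=i\Rightarrow|\ell\cap\mathcal{C}|=q+1-i$ together with the $P$-independence of the $x_i$, and the short verification of (M1).
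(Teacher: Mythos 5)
Your proposal is correct and follows essentially the same route as the paper, whose entire proof is the single observation that each $i$-secant of $\mathcal{S}$ becomes a $(q+1-i)$-secant of $PG(N,q)\setminus\mathcal{S}$; you merely spell out the resulting multiplicity count $\sum_i x_i\binom{q+1-i}{2}$ and its independence of $P\in\mathcal{S}$, and add the (routine, omitted in the paper) verification of (M1) and (M2).
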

\proof
It is enough to observe that each $i$-secant of $\mathcal{S}$  becomes a $(q+1-i)$-secant of $PG(N,q)\setminus \mathcal{S}$. 
\endproof

\begin{corollary}\label{Corollary}
\begin{enumerate}
  \item [(i)]Let $q=2^{v}$ be even and $s=2^{k}$, $1\leq k\leq v-1.$ Consider $n=(s-1)q+s$ and let
$\mathcal{K}$ be a maximal $(n,s)$-arc in $PG(2,q)$. The set
      $\mathcal{S}=PG(2,q)\setminus \mathcal{K}$ is a $
      (1,\mu )$-OS $n$-set with
\begin{equation*}
n=q^{2}+q+1-(s-1)q-s,~~\mu =(q+1)\binom{q+1-s}{2}.
\end{equation*}
An $[n,n-3,3]_{q}2$ code $C_{\mathcal{S}}$ corresponding to
$\mathcal{S}$\ is a $\left( 2,\mu\right) $-APMCF code.
\item [(ii)] Let $q$ be square and let $\mathcal{H}$ be a Hermitian curve in $ PG(2,q)$. The set $\mathcal{S}=PG(2,q)\setminus
      \mathcal{H}$ is a $ (1,\mu )$-OS $n$-set with
      parameters
      \begin{equation*}
n=q^{2}-q\sqrt{q}+q,~~\mu=\binom{q}{2}+q\binom{q-\sqrt{q}}{2}.
\end{equation*}
        An $[n,n-3,3]_{q}2$ code $C_{\mathcal{S}}$
      corresponding to $\mathcal{S}$ is a $(2,\mu)$-APMCF
      code.
  \item [(iii)] Let $q$ be square and consider a 
a Baer subplane  $\mathcal{B}$  in $ PG(2,q)$. The set $\mathcal{S}=PG(2,q)\setminus
      \mathcal{B}$ is a $ (1,\mu )$-OS $n$-set with
      parameters
      \begin{equation*}
n=q^{2}-\sqrt{q},~~\mu=(\sqrt{q}+1)\binom {q-\sqrt{q}}{2}+(q-\sqrt{q})\binom{q}{2}.
\end{equation*}
        An $[n,n-3,3]_{q}2$ code $C_{\mathcal{S}}$
      corresponding to $\mathcal{S}$ is a $(2,\mu)$-APMCF
      code.
\item [(iv)] \label{th_arc}Let $\mathcal{S}\subset PG(N,q)$ be a set such that
$PG(N,q)\setminus \mathcal{S}$ is a
           $k$-cap, $N\ge2$, $k\ge2$. Then $\mathcal{S}$ is a $(1,\mu )$-OS $n$-set with
           parameters
           \begin{equation*}
n=\frac{q^{N+1}-1}{q-1}-k,~~\mu=(k-1)\binom{q-1}{2}+\left(\frac{q^{N}-1}{q-1}-k+1\right)\binom{q}{2}.
\end{equation*}
            An $[n,n-N-1,3]_{q}2$ code $C_{\mathcal{S}}$ corresponding to
$\mathcal{S}$\ is a $\left( 2,\mu\right) $-APMCF code.

\end{enumerate}
\end{corollary}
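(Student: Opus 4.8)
The unifying point is that all four parts are instances of Proposition~\ref{TheoremComplement}, applied to the correct ``base configuration.'' In each part let $\mathcal X$ denote that configuration: in (i)--(iii) respectively the maximal arc $\mathcal K$, the Hermitian curve $\mathcal H$, and the Baer subplane $\mathcal B$, and in (iv) the cap $\mathcal C=PG(N,q)\setminus\mathcal S$. Then in every case the claimed optimal saturating set is exactly $\mathcal S=PG(N,q)\setminus\mathcal X$, so Proposition~\ref{TheoremComplement} applies with $\mathcal X$ in the role of its ``$\mathcal S$.'' The plan is therefore to verify the single hypothesis of that proposition --- that through each point $P\in\mathcal X$ the number $x_i$ of $i$-secants of $\mathcal X$ is a constant independent of $P$ --- and then to read off $\mu=\sum_{i\ge 1}x_i\binom{q+1-i}{2}$ and $n=\#PG(N,q)-\#\mathcal X$ directly from the proposition.

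First I would record the through-a-point secant pattern of each configuration from the classical line-intersection data (see \cite{Hirs1998}), recalling that a point of $PG(N,q)$ lies on $\frac{q^{N}-1}{q-1}$ lines, which is $q+1$ when $N=2$. For a maximal $(n,s)$-arc every line is a $0$- or $s$-secant, so each of the $q+1$ lines through a point $P\in\mathcal K$ is an $s$-secant, giving $x_s=q+1$ and all other $x_i=0$. For the Hermitian curve $\mathcal H$, through each of its points there pass one tangent and $q$ lines that are $(\sqrt q+1)$-secants, so $x_1=1$ and $x_{\sqrt q+1}=q$. For the Baer subplane $\mathcal B$, each of its points lies on the $\sqrt q+1$ subplane-lines (all $(\sqrt q+1)$-secants) and on $q-\sqrt q$ tangents, so $x_{\sqrt q+1}=\sqrt q+1$ and $x_1=q-\sqrt q$. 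Finally, for a $k$-cap $\mathcal C$ every line is a $0$-, $1$-, or $2$-secant; through $P\in\mathcal C$ each of the other $k-1$ cap points determines a distinct bisecant and the remaining lines are tangents, so $x_2=k-1$ and $x_1=\frac{q^{N}-1}{q-1}-(k-1)$. In all four cases this pattern is identical for every point $P$ of $\mathcal X$, which is precisely the uniformity demanded by Proposition~\ref{TheoremComplement}.

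Substituting these values into $\mu=\sum_{i\ge1}x_i\binom{q+1-i}{2}$ gives the four stated expressions: $(q+1)\binom{q+1-s}{2}$ in (i); $\binom{q}{2}+q\binom{q-\sqrt q}{2}$ in (ii); $(\sqrt q+1)\binom{q-\sqrt q}{2}+(q-\sqrt q)\binom{q}{2}$ in (iii); and $(k-1)\binom{q-1}{2}+\bigl(\frac{q^{N}-1}{q-1}-k+1\bigr)\binom{q}{2}$ in (iv). The displayed values of $n$ follow by subtracting $\#\mathcal X$ (namely $(s-1)q+s$, $q\sqrt q+1$, $q+\sqrt q+1$, and $k$) from $\#PG(N,q)$. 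It then remains only to fix the minimum distance: in each case there is a line meeting $\mathcal X$ in at most $q-2$ points --- for the arc an $s$-secant (since $s=2^{k}\le q/2$), and for $\mathcal H$, $\mathcal B$, $\mathcal C$ a tangent --- and such a line carries at least three points of $\mathcal S$, so $C_{\mathcal S}$ has three linearly dependent columns and $d(C_{\mathcal S})=3$. By the discussion at the start of Section~\ref{sec_classic_object} this makes $C_{\mathcal S}$ a $(2,\mu)$-APMCF code, as asserted.

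The only substantive step is the uniformity verification in the second paragraph. For the arc and the cap it is elementary, following at once from ``every line is a $0$- or $s$-secant'' and from ``no three points are collinear,'' respectively. The genuine care is required in quoting the exact tangent/secant incidence numbers for the Hermitian curve and the Baer subplane and in keeping the bookkeeping consistent; I expect this to be the main --- though routine --- obstacle. Once the $x_i$ are in hand, Proposition~\ref{TheoremComplement} already encodes the transition ``$i$-secant of $\mathcal X\mapsto(q+1-i)$-secant of $\mathcal S$'' that converts the local counts into the value of $\mu$, so no further difficulty arises.
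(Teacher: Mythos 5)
Your proposal is correct and matches the paper's own argument, which likewise derives all four parts directly from Proposition~\ref{TheoremComplement} together with the standard through-a-point incidence data of the maximal arc, Hermitian curve, Baer subplane, and cap. Your explicit tabulation of the $x_i$ and the minimum-distance check simply spell out what the paper leaves as ``properties of the geometrical objects as pointed out in the previous propositions.''
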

\proof
The claims follow directly from Proposition \ref{TheoremComplement} and properties of the geometrical objects as pointed out in the previous propositions.
\endproof

\begin{proposition}
\label{th_oval}Let $q$ be odd. An oval $\mathcal{O}$ in
$PG(2,q)$ is a $ (1,\mu )$-saturating $n$-set with parameters
\begin{equation*}
n=q+1,~~\mu =\frac{1}{2}(q-1).
\end{equation*}
An $[n,n-3,4]_{q}2$ code $C_{\mathcal{O}}$ corresponding to
$\mathcal{O}$ is a $(2,\mu)$-MCF code with $\mu $-density
$\gamma_{\mu }(C_{\mathcal{O}},2)=1+\frac{1}{q}.$
\end{proposition}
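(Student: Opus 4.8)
The plan is to read off the line--point incidences of the oval from the classical geometry of $PG(2,q)$ with $q$ odd, and then to substitute the resulting data into Proposition~\ref{prop optimal (1,mu) OSbis}(i). Since $\mathcal{O}$ is an arc of $n=q+1$ points with no three collinear, every secant meets $\mathcal{O}$ in exactly two points, so each secant has multiplicity $\binom{2}{2}=1$ and counting secants with multiplicity reduces to plainly counting secants. Condition (M1) is immediate because three non-collinear points of $\mathcal{O}$ already generate $PG(2,q)$, and (M2) holds since $n=q+1<q^2+q+1=\#PG(2,q)$.

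For (M3) I would fix a point $Q\notin\mathcal{O}$ and let $t$, $s$, $e$ denote the numbers of tangent, secant and external lines of $\mathcal{O}$ through $Q$, so that $t+s+e=q+1$. Counting the $q+1$ points of $\mathcal{O}$ according to the unique line of the pencil at $Q$ on which each of them lies gives the double-counting identity
\begin{equation*}
t+2s=q+1 .
\end{equation*}
The decisive input, and the only place where $q$ odd is genuinely used, is the classical fact that for $q$ odd a point off a conic lies on either $0$ or $2$ tangents, the tangents enveloping a dual conic (see \cite{Hirs1998}). Hence $t\in\{0,2\}$, giving $s=(q+1)/2$ for internal points ($t=0$) and $s=(q-1)/2$ for external points ($t=2$). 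Since external points exist, every $Q\notin\mathcal{O}$ lies on at least $(q-1)/2$ secants with the minimum attained at external points, so $\mathcal{O}$ is $(1,\mu)$-saturating with $\mu=\tfrac12(q-1)$; it is \emph{not} an OS set precisely because internal points lie on $(q+1)/2>\mu$ secants.

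Next I would pin down the code parameters. Writing the coordinates of the $q+1$ points of $\mathcal{O}$ as the columns of a $3\times(q+1)$ parity-check matrix $H$, the arc property says that every three columns are linearly independent, whence $d(C_{\mathcal{O}})\ge4$; and since $H$ has only three rows, any four columns are dependent, the dependence necessarily involving all four (otherwise three columns would be dependent), so there is a weight-$4$ codeword and $d(C_{\mathcal{O}})=4$. The covering radius is $R=2$: by (M3) every external point lies on a secant, giving a syndrome of weight at most two, while (M2) forbids $R=1$.

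Finally, the $\mu$-density follows by substitution into Proposition~\ref{prop optimal (1,mu) OSbis}(i). With $B_3(\mathcal{O})=0$, $n=q+1$, $\mu=\tfrac12(q-1)$ and $\#PG(2,q)=q^2+q+1$, the numerator equals $\tfrac{q}{2}(q-1)$ and the denominator equals $\tfrac{q-1}{2}\cdot\tfrac{q^2}{q+1}$, so that
\begin{equation*}
\gamma_{\mu}(C_{\mathcal{O}},2)=\frac{\tfrac12 q(q-1)}{\tfrac12(q-1)\cdot\tfrac{q^2}{q+1}}=\frac{q+1}{q}=1+\frac{1}{q}.
\end{equation*}
Equivalently one can compute this directly: the $\binom{q+1}{2}$ secants contribute $q-1$ external incidences each, and dividing the total by the $q^2$ points off $\mathcal{O}$ and by $\mu$ yields the same value. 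There is no serious obstacle here beyond invoking the $0$-or-$2$ tangent dichotomy for $q$ odd, which is exactly what forces the external points --- and hence the value of $\mu$ --- to behave as claimed.
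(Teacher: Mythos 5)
Your proof is correct and follows essentially the same route as the paper: both arguments rest on the classification of points off the oval into internal points (on $\frac{1}{2}(q+1)$ bisecants) and external points (on $\frac{1}{2}(q-1)$ bisecants), which yields $\mu=\frac{1}{2}(q-1)$, and then compute the $\mu$-density as the average number of secants through a point of $PG(2,q)\setminus\mathcal{O}$. The only cosmetic difference is that you invoke the closed formula of Proposition~\ref{prop optimal (1,mu) OSbis}(i) with $B_3(\mathcal{O})=0$, while the paper writes out the weighted average over the $\frac{1}{2}q(q-1)$ internal and $\frac{1}{2}q(q+1)$ external points via Proposition~\ref{prop optimal (1,mu) OS}(i); these are the same computation.
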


\proof Each internal point of the oval lies on $\frac{1}{2} (q+1)$ bisecants, whereas every external point lies on $\frac{1}{2}(q-1)$ bisecants. Therefore, $\mu =\frac{1}{2}(q-1).$ By Proposition \ref
{prop optimal (1,mu) OS}(i),
\begin{equation*}
\mu\gamma_{\mu}(C_{\mathcal{O}},2)=\frac{\frac{1}{2}q(q-1)\cdot \frac{1}{2}(q+1)+
\frac{1}{2}q(q+1)\cdot \frac{1}{2}(q-1)}{\frac{1}{2}q(q-1)+\frac{1}{2}q(q+1)}
=\frac{(q+1)(q-1)}{2q}.
\end{equation*}
\endproof

\section{Constructions of small $(1,\mu)$-saturating sets in $PG(2,q)$}\label{sezquattro}
In this section, we summarize some results concerning $(1,\mu)$-saturating sets in projective planes $PG(2,q)$.

\subsection{Bounds}\label{subsec_bounds}
In this subsection we present some upper and lower bounds on
the size of minimal $(1,\mu)$-saturating sets in $PG(2,q)$.
\begin{proposition} \label{prop new_bounds}
In $PG(2,q)$, for a minimal $(1,\mu)$-saturated set $S$ the
following holds.
\begin{itemize}
  \item [(i)]
  \begin{align*}
  \mu\le (q+1)\binom{q}{2}.
  \end{align*}
  \item [(ii)]
  \begin{align*}
  |S|\le\left\{\begin{array}{ccc}
                 q+\mu+1 & \mbox{if} & \mu\le q+2 \\
                 \min\{q+\mu,~q^{2}+q\} &  \mbox{if} & \mu\ge q+3
               \end{array}
  \right. .
  \end{align*}
\end{itemize}
\end{proposition}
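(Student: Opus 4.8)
The bound (i) is the easy one: by (M2) there is a point $Q\in PG(2,q)\setminus S$, and through $Q$ there pass exactly $q+1$ lines, each meeting $S$ in at most $q$ points since $Q\notin S$. Hence the number of secants of $S$ through $Q$, counted with multiplicity, is at most $(q+1)\binom{q}{2}$; by (M3) this number is at least $\mu$, giving $\mu\le (q+1)\binom{q}{2}$.

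For (ii) the plan is to exploit minimality directly by deleting a single point. I fix any $P\in S$; by minimality $S\setminus\{P\}$ is not $(1,\mu)$-saturating, and I split into the three ways this can fail. If (M1) fails then $S\setminus\{P\}$ lies on a line, so $n\le q+2$. If (M3) fails at $P$ itself, i.e. the point $P$ (now external) lies on fewer than $\mu$ secants of $S\setminus\{P\}$, then writing $c_\ell=\#(\ell\cap S)-1$ for the $q+1$ lines $\ell$ through $P$ and using $\binom{c}{2}\ge c-1$ one gets $\mu>\sum_{\ell\ni P}\binom{c_\ell}{2}\ge (n-1)-(q+1)=n-q-2$, i.e. $n\le q+\mu+1$.

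The interesting case is when (M3) fails at an external witness $Q\neq P$. Here only the line $\overline{PQ}$ loses a point when $P$ is deleted, so if $k=\#(\overline{PQ}\cap S)$ the weighted secant count at $Q$ drops by exactly $k-1$; since it was at least $\mu$ and becomes less than $\mu$, the excess $e_Q:=\big(\sum_{\ell\ni Q}\binom{\#(\ell\cap S)}{2}\big)-\mu$ satisfies $e_Q\le k-2$. I then bound the $q$ lines through $Q$ other than $\overline{PQ}$: they carry $n-k$ points and total multiplicity $\mu+e_Q-\binom{k}{2}$, so the elementary inequality $a\le\binom{a}{2}+1$ (valid for $a\ge1$), summed over these at most $q$ occupied lines, yields $n-k\le(\mu+e_Q-\binom{k}{2})+q$. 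Substituting $e_Q\le k-2$ gives $n\le q+\mu+\big(2k-2-\binom{k}{2}\big)$, and since $(k-2)(k-3)\ge0$ for every integer $k$ we have $2k-2-\binom{k}{2}\le1$, hence $n\le q+\mu+1$ again. Thus $n\le q+\mu+1$ in all three cases, which is the claimed bound for $\mu\le q+2$.

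For $\mu\ge q+3$ I would sharpen each case to the strict inequality $n\le q+\mu$. The point is that equality $n=q+\mu+1$ forces every intermediate estimate to be tight: in the external-witness case this means $k\in\{2,3\}$, $e_Q=k-2$, and every line through $Q$ other than $\overline{PQ}$ is a $1$- or $2$-secant, whence $\mu=w_Q-e_Q\le\big(\binom{k}{2}-(k-2)\big)+q\le q+2$ because $\binom{k}{2}-k+2\le2$ for $k\in\{2,3\}$; the self-coverage and spanning cases likewise only reach $n=q+\mu+1$ under $\mu\le q+2$. Consequently no equality is possible once $\mu\ge q+3$, and together with the trivial cap $n\le q^2+q$ coming from (M2) this gives $n\le\min\{q+\mu,\;q^2+q\}$. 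The only genuinely delicate point is the bookkeeping in the external-witness case; everything else is a short convexity estimate, and the uniform inequality $(k-2)(k-3)\ge0$ is what makes the bound independent of the secant length $k$.
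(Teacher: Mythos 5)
Your proof is correct, but for part (ii) it takes a genuinely different route from the paper's. The paper proves a short ``supersaturation'' statement: \emph{any} $(q+\mu+1)$-set in $PG(2,q)$ already satisfies (M3), because distributing its points over the $q+1$ lines through an external point $P$ and using $\binom{c}{2}\ge c-1$ yields at least $(q+\mu+1)-(q+1)=\mu$ weighted secants through $P$; when $\mu\ge q+3$ the same count applied to a $(q+\mu)$-set works because $q+\mu\ge 2q+3$ forces some line through $P$ to carry a triple, which contributes $\binom{3}{2}$ rather than $2$. Minimality then caps $|S|$ at the threshold at once, since a larger minimal set would properly contain a saturating subset of threshold size. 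You instead delete a point $P$ from the minimal set and classify how $S\setminus\{P\}$ fails to saturate; your bookkeeping in the external-witness case (the drop of exactly $k-1$ on $\overline{PQ}$, the bound $e_Q\le k-2$, the inequality $a\le\binom{a}{2}+1$, and the uniform estimate $2k-2-\binom{k}{2}\le 1$ from $(k-2)(k-3)\ge 0$) all checks out, as does the tightness analysis showing that $n=q+\mu+1$ forces $\mu\le q+2$ in every branch. Both arguments ultimately rest on the same convexity inequality applied to a pencil of lines, but the paper applies it to an arbitrary external point of an arbitrary set of threshold size (three lines of proof), while you apply it at a witness of failure of the punctured set (longer, but it yields extra structural information about near-extremal configurations, e.g.\ that the critical secant has length $2$ or $3$ and the remaining lines through the witness are $1$- or $2$-secants). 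Your argument for (i) is also fine and, if anything, more explicit than the paper's, which merely exhibits the extremal $(q^{2}+q)$-set rather than spelling out the upper-bound count at an external point.
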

\proof
\begin{itemize}
  \item [(i)]Any $(q^{2}+q)$-set in $PG(2,q)$ is a $\left((q+1)\binom{q}{2}\right)$-saturating set.
  \item [(ii)] Let $S$ be $(q+\mu +1)$-set in $PG(2,q)$, $q>2,$ $ \mu \le q+2$ and consider a point  $P\in PG(2,q)\setminus S$. On the $q+1$ lines through $P$ there are at least $ \mu $ pairs of points of $S$ and therefore $S$ is a
      $(1,\mu )$-saturating set, possibly not minimal.

      If $ \mu \ge q+3$ and $|S|=q+\mu$, then at least one
      triple of points of $S$ lies on the same line through
      $P$. So, in total there are at least
      $\binom{3}{2}+(\mu-3)= \mu $ pairs of points of $S$.
      The bound $|S|\le q^{2}+q$ holds due to Condition
      (M2).
\end{itemize}
 \endproof

We recall the following proposition from~\cite{BDGMP-AMC2015} concerning bounds on the smallest possible size of $(1,\mu)$-saturating sets.

\begin{proposition}
\label{lem_lower_bounds} For the length function $\ell _{\mu }(2,3,q)$, the following relations hold.
\begin{itemize}
\item[\textbf{(i)}] Trivial bound:
\begin{equation}
\ell _{\mu }(2,3,q)\geq \sqrt{2\mu q}.  \label{eq4_triv_lower_bound}
\end{equation}

\item[\textbf{(ii)}] Probabilistic bound:
\begin{equation}
\ell _{\mu }(2,3,q) < 66\sqrt{\mu q \ln q}, \quad \textrm{if } \mu <121 q \log q.
\label{eq4_probab_bound}
\end{equation}

\item[\textbf{(iii)}] Baer bound for $q$ a square:
\begin{equation*}
\ell _{\mu }(2,3,q)\leq \mu (3\sqrt{q}-1).
\label{eq4_bound_q_square}
\end{equation*}
\end{itemize}
\end{proposition}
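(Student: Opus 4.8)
The plan is to treat the three estimates separately, since only (ii) carries real content. Part (iii) is immediate from material already in the paper: for $q$ a square, Davydov's construction gives a $1$-saturating set in $PG(2,q)$ of size at most $3\sqrt q - 1$, so that $\ell(2,3,q)=\ell_1(2,3,q)\le 3\sqrt q-1$ (this is the bound $t_2(q)\le 3\sqrt q-1$ recalled in Section \ref{q even}, following \cite{Dav95}). Taking $\mu$ disjoint copies of such a set as in Remark \ref{repeating} and invoking \eqref{disjoint}, I get $\ell_\mu(2,3,q)\le \mu\,\ell(2,3,q)\le \mu(3\sqrt q-1)$, which is exactly the claim.

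For the trivial bound (i) I would double count incidences between the points outside $S$ and the secants of $S$ counted with multiplicity. Let $S$ be a $(1,\mu)$-saturating $n$-set, for a line $\ell$ put $m_\ell=\#(\ell\cap S)$, and write $\mu(Q)$ for the number of secants through a point $Q$ counted with multiplicity. Since every pair of points of $S$ determines a unique line, $\sum_\ell\binom{m_\ell}{2}=\binom{n}{2}$. A secant $\ell$ contributes its multiplicity $\binom{m_\ell}{2}$ to each of the $q+1-m_\ell\le q-1$ of its points lying outside $S$, so
\[
\sum_{Q\in PG(2,q)\setminus S}\mu(Q)=\sum_\ell\binom{m_\ell}{2}(q+1-m_\ell)\le (q-1)\binom{n}{2}.
\]
By (M3) each of the $q^2+q+1-n$ points outside $S$ satisfies $\mu(Q)\ge\mu$, hence $\mu(q^2+q+1-n)\le (q-1)\binom{n}{2}$. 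As $n\le q^2+q$ by (M2), the left side has order $\mu q^2$ and the right side is below $\tfrac12 qn^2$, which forces $n\ge\sqrt{2\mu q}$ to leading order (and the estimate is essentially sharp, e.g. for the Hermitian curve of Proposition \ref{th_Hermit}).

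The substantive part is the probabilistic upper bound (ii), which I would prove by the random/deletion method. Choose each point of $PG(2,q)$ independently with probability $p$, tuned so that the expected size $p\,q^2$ has the target order $\sqrt{\mu q\ln q}$; concretely $p$ is a constant multiple of $\sqrt{\mu\ln q}\,q^{-3/2}$. Fix $Q$ not chosen: the $q+1$ lines through $Q$ carry pairwise disjoint point sets, so the counts $a_1,\dots,a_{q+1}$ of chosen points on them are independent, and $\mu(Q)=\sum_i\binom{a_i}{2}$ is a sum of independent bounded variables with mean of order $\tfrac12 q^3p^2$. This tuning makes the mean a suitable multiple of $\mu\ln q$, and a Chernoff/Bernstein-type tail bound then yields $\Pr[\mu(Q)<\mu]<(q^2+q+1)^{-1}$. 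A union bound over the at most $q^2+q+1$ points shows that with positive probability the chosen set is $(1,\mu)$-saturating, and a standard alteration deletes or repairs the few under-covered points while adding only lower-order many. The explicit constant $66$ and the side condition $\mu<121q\log q$ are precisely what is needed to push the tail estimate below the $(q^2+q+1)^{-1}$ threshold and keep $p<1$; outside this range the disjoint-copies bound \eqref{disjoint} is anyway the better estimate.

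The main obstacle is therefore entirely inside (ii): securing the concentration inequality with the explicit numerical constant, in a regime where $\mu$ may be polynomially large in $q$, is delicate because the summands $\binom{a_i}{2}$ are \emph{quadratic} in the $a_i$ rather than indicators, so one must control the variance (a Bernstein-type bound) instead of invoking a bare Chernoff estimate, and then track the constants carefully enough to certify the value $66$.
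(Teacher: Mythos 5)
First, a point of comparison: the paper does not prove this proposition at all --- it is explicitly recalled from \cite{BDGMP-AMC2015} --- so there is no in-paper argument to measure yours against, and I can only judge the reconstruction on its own terms. Your part (iii) is correct and uses exactly the machinery the paper itself supplies: $t_2(q)\le 3\sqrt q-1$ from \cite{Dav95} together with the disjoint-copies inequality \eqref{disjoint} of Remark \ref{repeating} gives $\ell_\mu(2,3,q)\le\mu\,\ell(2,3,q)\le\mu(3\sqrt q-1)$ at once. Your part (i) is the natural double count, but as executed it does not deliver the inequality as stated. From $\mu(q^2+q+1-n)\le(q-1)\binom{n}{2}$ one obtains $n^2\ge 2\mu q$ only if $q^2+q+1-n\ge q(q-1)$, i.e.\ $n\le 2q+1$ (a slightly sharper analysis pushes this to roughly $n\le(1+\sqrt2)q$); since $n$ may be as large as $q^2+q$ and $\mu$ as large as $(q+1)\binom{q}{2}$, your chain of inequalities gives something strictly weaker in the remaining range, and your own phrase ``to leading order'' concedes this. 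Because \eqref{eq4_triv_lower_bound} is stated as an exact inequality for all admissible $\mu$, you would need either a separate argument for large $n$ or an explicit acknowledgement that the bound is being proved only asymptotically.

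The genuine gap is part (ii), and you identify it yourself. What you give is a plan, not a proof: the whole content of the probabilistic bound is the tail estimate $\Pr\bigl[\sum_i\binom{a_i}{2}<\mu\bigr]<(q^2+q+1)^{-1}$ for the independent line counts $a_i$, carried out with constants sharp enough to certify the factor $66$ and to explain the threshold $\mu<121\,q\log q$, and that is precisely the step you defer. Your preparatory observations are sound --- the lines through an unchosen point $Q$ meet only at $Q$, so the $a_i$ are indeed independent binomials, and the summands $\binom{a_i}{2}$ being quadratic does force a Bernstein-type (variance-controlled) bound rather than a bare Chernoff estimate --- but until that inequality is actually established, and the alteration step is shown to add only lower-order many points while preserving the count for every deficient $Q$, statement (ii) is not proved. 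As it stands, the proposal proves (iii), proves (i) only in a restricted range, and merely outlines (ii).
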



In some cases we can do better than the trivial lower bound
mentioned above.
\begin{proposition}
\label{th:minimalsize} Let $A$ be a $(1,\mu )$-saturating set in $PG(2,q)$ of size $k$.
Suppose that $\ell$ and $\ell^{\prime}$ are an $r$-secant and an $s$-secant of $A$ respectively, with $s\geq r$. Then
$$
 \\[-2 mm]
k\geq \min \left\{ \begin{array}{l}
r+\frac{1}{2} + \sqrt{(s-r)(s+r-2)+2\mu (q-r+1) +\frac{5}{4}},
\\
 r +\frac{1}{2} + \sqrt{(s-r)(s+r-1)+2\mu (q-r) +\frac{1}{4}}
\end{array}
\right\}.$$

\end{proposition}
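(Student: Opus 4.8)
The plan is to read the saturation condition locally along the $r$-secant $\ell$ and to use the $s$-secant $\ell^{\prime}$ to force an extra concentration of secants at the point $X=\ell\cap\ell^{\prime}$. Write $A_{\ell}=A\cap\ell$, so $|A_{\ell}|=r$, and $A^{\prime}=A\setminus\ell$, so $|A^{\prime}|=k-r$. First I would record what (M3) says at a point $Q\in\ell\setminus A$: the secants through $Q$, counted with multiplicity, number at least $\mu$. Among these, $\ell$ itself occurs with multiplicity $\binom{r}{2}$, while every other line $t\ni Q$ meets $\ell$ only in $Q\notin A$ and hence meets $A$ inside $A^{\prime}$. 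Thus $\binom{r}{2}+\sum_{t\ni Q,\,t\neq\ell}\binom{|t\cap A^{\prime}|}{2}\ge\mu$, i.e. every external point of $\ell$ must ``see'' at least $\mu-\binom{r}{2}$ secant-pairs coming from $A^{\prime}$.

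The second step is a change of viewpoint. Every pair of points of $A^{\prime}$ spans a unique line, and that line meets $\ell$ in a unique point; assigning each pair to that point distributes the $\binom{k-r}{2}$ pairs of $A^{\prime}$ among the points of $\ell$, the pairs whose spanning line meets $\ell$ inside $A_{\ell}$ being the only ones ``wasted.'' The point $X=\ell\cap\ell^{\prime}$ is distinguished here, because the pencil at $X$ already contains the block $\ell^{\prime}$ of size $s$: this supplies $\binom{s}{2}$ pairs of $A^{\prime}$ at $X$ when $X\notin A$, and $\binom{s-1}{2}$ pairs among the wasted ones when $X\in A$. Combining the local inequalities at the external points with this accounting of where the pairs land is what should manufacture the term $\binom{s}{2}$ together with the factor $(q-r)$ or $(q-r+1)$.

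I would then split according to whether $X\in A$. In one case the privileged point $X$ is itself external to $A$ and is treated separately from the remaining $q-r$ external points of $\ell$, forcing $\ell^{\prime}$ to load $\binom{s}{2}$ extra pairs there; in the other case $X\in A_{\ell}$ and all $q+1-r=q-r+1$ external points of $\ell$ play the same role while $\ell^{\prime}$ contributes through the wasted pairs. Assembling the contributions should yield in the two cases a quadratic inequality of the shape $\binom{k-r}{2}+\binom{r}{2}\ge\binom{s}{2}+\mu(q-r)$ (and the companion with $q-r+1$ and $s+r-2$). Since $\binom{s}{2}-\binom{r}{2}=\tfrac12(s-r)(s+r-1)$, rewriting $2\binom{k-r}{2}=(k-r)(k-r-1)$ and solving for $k$ by the quadratic formula, keeping the larger root and using $k-r-\tfrac12\ge0$, converts these into the two displayed lower bounds $r+\tfrac12+\sqrt{(s-r)(s+r-1)+2\mu(q-r)+\tfrac14}$ and $r+\tfrac12+\sqrt{(s-r)(s+r-2)+2\mu(q-r+1)+\tfrac54}$. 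As the configuration must fall into one of the two cases, $k$ is at least the smaller of the two values, which is exactly the claimed minimum.

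The main obstacle will be the exact bookkeeping rather than the strategy: the contribution $\binom{r}{2}$ of $\ell$ is seen at \emph{every} external point of $\ell$, so a naive summation overpays this term, and the delicate point is to arrange the count (for instance by isolating $X$, by controlling the wasted pairs, or by summing over points off $\ell$ where $A_{\ell}$ contributes no collinear pair) so that only a single $\binom{r}{2}$ and a single $\binom{s}{2}$ survive and the radicands come out as the precise expressions above rather than weaker ones. I would use a conic as a consistency check, where $r=s=2$, $\mu=\tfrac12(q-1)$ and $k=q+1$: there the first displayed bound is met with equality and the second is strictly larger, which both pins down the direction of the case analysis and confirms why the minimum of the two quantities is the correct form of the statement.
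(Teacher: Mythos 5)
Your strategy is the same as the paper's: split on whether $X=\ell\cap\ell^{\prime}$ lies in $A$, sum condition (M3) over the points of $\ell$ outside $A$ (all $q+1-r$ of them when $X\in A$, the $q-r$ points other than $X$ when $X\notin A$), charge each pair of $A\setminus\ell$ to the point where its joining line meets $\ell$, discard as wasted the pairs carried by $\ell^{\prime}$, and solve the resulting quadratic. Your case-($X\notin A$) inequality $\binom{k-r}{2}+\binom{r}{2}\ge\binom{s}{2}+\mu(q-r)$ is algebraically identical to the paper's $(k-r-s)s+\binom{k-r-s}{2}+\binom{r}{2}\ge\mu(q-r)$, and it does yield the second displayed bound exactly as you compute.

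The gap is precisely the obstacle you flag and then leave open, and it cannot be closed along the lines you describe. Summing the coverage over $N$ external points of $\ell$ charges the line $\ell$, with multiplicity $\binom{r}{2}$, once at \emph{each} of those $N$ points, so the honest upper bound on the total coverage contains $N\binom{r}{2}$, not a single $\binom{r}{2}$; what the double count actually proves is $\binom{k-r}{2}-\binom{s}{2}\ge\bigl(\mu-\binom{r}{2}\bigr)(q-r)$ and its companion, which for $r\ge 2$ are strictly weaker than the inequalities you need. The paper's proof writes the single-$\binom{r}{2}$ inequalities without justifying this step, so you have in effect reproduced its argument together with its weak point; and the strong per-case claim is genuinely false: for a conic in $PG(2,5)$ (so $k=6$, $\mu=2$) and two bisecants meeting in a point of the conic ($r=s=2$, $X\in A$), the first displayed bound is $2.5+\sqrt{17.25}>6.6$ and would force $k\ge 7$, while the statement survives only because the second bound equals $6$ and the minimum is taken. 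Moreover, even granting the paper's case-($X\in A$) inequality, expanding it gives $\binom{k-r}{2}\ge\binom{s-1}{2}-\binom{r}{2}+\mu(q-r+1)$, whose radicand falls short of the displayed $(s-r)(s+r-2)+2\mu(q-r+1)+\frac{5}{4}$ by $s+r-1$, so that case does not match the stated bound either. A completed write-up must therefore either settle for the weaker bounds with $\mu-\binom{r}{2}$ in place of $\mu$ (and the corresponding smaller radicands), or supply an argument of a different kind for the radicands as stated; the bookkeeping rearrangement you hope for does not exist.
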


\proof
There are  $k-r-s+1$ or $k-r-s$ points of $A$ not contained in
$\ell \cup \ell^{\prime}$, depending on the point $\ell \cap
\ell^{\prime}$ belonging or not to $A$. Since $A$ is a $(1,\mu
)$-saturating set, then each point of $\ell$ not belonging to
$A$ is covered at least $\mu$ times. Therefore, in the first
case we obtain
$$(k-r-s+1)(s-1) +\binom{k-r-s+1}{2}+\binom{r}{2}\geq \mu
(q+1-r)$$
which implies
$$k\geq r+\frac{1}{2} + \sqrt{(s-r)(s+r-2)+2\mu (q-r+1) +\frac{5}{4}},$$ whereas in the second case
$$(k-r-s)s +\binom{k-r-s}{2}+\binom{r}{2}\geq \mu (q-r)$$
implies
$$k\geq r +\frac{1}{2} + \sqrt{(s-r)(s+r-1)+2\mu (q-r)
+\frac{1}{4}}.$$ Note that in the second case we do not
consider how many times  the intersection point of $\ell$ and $\ell^{\prime}$ is covered.
\endproof

\subsection{Constructions}\label{subsec_constr}
In this subsection we explicitly construct examples of
$(1,2)$-saturating sets in $PG(2,q)$ of sizes $q+2$
and $q+3$.

\begin{theorem}
\label{th:qp3primo} There exists a minimal $(1,2)$-saturating
set of size $q+3$ in $PG(2,q)$, with $q=p^h$, $p$ prime. Its
stabilizer in $P\varGamma L(3,q)$ has size $hq(q-1)$. The
corresponding $[q+3,q]_{q}2$ code $C$ is a $(2,2)$-MCF with
$\mu$-density $\gamma_{2}(C,2)\thickapprox 1+\frac{1}{2q}$.
\end{theorem}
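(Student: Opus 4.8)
The plan is to exhibit the set explicitly as a full line together with two extra points, and then to verify the four assertions one at a time. Fix a line $\ell$ of $PG(2,q)$, a point $C\in\ell$, and a second line $m$ through $C$; choose two points $A,B\in m\setminus\{C\}$ and put
\[
S=\ell\cup\{A,B\},\qquad |S|=(q+1)+2=q+3.
\]
Since $m\cap\ell=\{C\}$ and $C\in S$, the line $m=\langle A,B\rangle$ is the unique trisecant of $S$, while $\ell$ is its unique $(q+1)$-secant; these two facts drive every part of the argument.

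First I would verify condition (M3) of Section \ref{seztre} with $\mu=2$. Let $P\in PG(2,q)\setminus S$. If $P\notin m$, the lines $PA$ and $PB$ are distinct and each meets $\ell$ in a point different from $A$ and $B$; hence they are two distinct bisecants of $S$ through $P$. If $P\in m$, then $P$ lies on the trisecant $m$, of multiplicity $\binom{3}{2}=3$. In either case $P$ is covered at least twice, so $S$ is $(1,2)$-saturating. Minimality I would then check by deleting points one at a time: removing $A$ (or $B$) leaves every point off $\ell$ on the single bisecant joining it to the surviving external point; removing $D\in\ell\setminus\{C\}$ kills the bisecant $\langle A,D\rangle$ for the points of that line (here $D\neq C$ guarantees $B\notin\langle A,D\rangle$); and removing $C$ degrades $m$ to a bisecant, so the $q-2$ points of $m\setminus S$ become singly covered. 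In each case some external point drops below coverage $2$.

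For the density I would apply Proposition \ref{prop optimal (1,mu) OSbis}(i) with $N=2$, $n=q+3$, and $B_3(S)=\binom{q+1}{3}+1$, the single extra collinear triple being $\{A,B,C\}$. A short computation gives
\[
\gamma_2(C_S,2)=\frac{2q^2+q-6}{2(q^2-2)}=1+\frac{q-2}{2(q^2-2)},
\]
which is of order $1+\tfrac1{2q}$. The same value follows by summing coverages directly: the $q^2-q$ external points off $m$ contribute $2$ each, and the $q-2$ external points of $m$ contribute $3$ each.

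The remaining, and most delicate, step is the stabilizer in $P\varGamma L(3,q)$. Any collineation fixing $S$ must fix $\ell$ (its unique $(q+1)$-secant), hence preserve $S\setminus\ell=\{A,B\}$, and must fix both $C=m\cap\ell$ and the trisecant $m$. Normalising coordinates so that $\ell\colon x_2=0$, $C=(1,0,0)$, $A=(0,0,1)$, $B=(1,0,1)$, preservation of $\ell$ and $m$ forces a matrix of the shape
\[
\begin{pmatrix} a & b & c\\ 0 & d & 0\\ 0 & 0 & e\end{pmatrix},
\]
and the subgroup fixing $A$ and $B$ individually is readily seen to have order $hq(q-1)$ (after normalising $a=1$ the free data are $b\in\mathbb{F}_q$, $d\in\mathbb{F}_q^{*}$, together with the $h$ Frobenius twists). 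The main obstacle I anticipate is precisely the bookkeeping that remains: one must settle whether any admissible collineation can interchange the two external points $A$ and $B$, since this is exactly what determines the final order. Resolving this last point — combining the one-parameter group $b$ along $m$, the scaling $d\neq0$, and the factor $h$, and correctly accounting for the action on $\{A,B\}$ — is the crux of establishing the claimed order $hq(q-1)$.
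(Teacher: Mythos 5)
Your construction, the verification of $(1,2)$-saturation, the minimality check, and the density computation are all correct and follow essentially the same route as the paper, which declares saturation and minimality ``straightforward to check'' and obtains the density by plugging $n=q+3$ and $B_{3}(S)=1+\binom{q+1}{3}$ into \eqref{eq2_uniformity density}; your value $\gamma_2(C_S,2)=1+\frac{q-2}{2(q^2-2)}\approx 1+\frac{1}{2q}$ agrees with the claim. The genuine gap is exactly the one you flag at the end: you never decide whether a collineation stabilizing $S=\ell\cup\{A,B\}$ can interchange $A$ and $B$, and without that decision the order of the stabilizer is only pinned down up to a factor of $2$. This cannot be waved away as bookkeeping, because the answer goes against the stated theorem. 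With your normalization $\ell\colon x_2=0$, $A=(0,0,1)$, $B=(1,0,1)$, the projectivity induced by
\[
\begin{pmatrix} -1 & 0 & 1\\ 0 & 1 & 0\\ 0 & 0 & 1\end{pmatrix}
\]
fixes $\ell$ setwise and sends $A\mapsto B$, $B\mapsto A$ (also in characteristic $2$, where $-1=1$), hence stabilizes $S$. The subgroup fixing $A$ and $B$ individually, which you correctly identify as having order $hq(q-1)$, therefore has index $2$ in the full stabilizer, and the stabilizer of the set $S$ has order $2hq(q-1)$.

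For comparison, the paper computes the stabilizer by orbit counting: it proves transitivity and divides $|P\varGamma L(3,q)|=hq^{3}(q^{2}-1)(q^{3}-1)$ by $q^{2}(q^{2}-1)(q^{2}+q+1)$. But $q^{2}(q^{2}-1)$ counts \emph{ordered} pairs $(P_1,P_2)$ of points off $\ell$, whereas the number of distinct sets $S$ is half that; the quotient $hq(q-1)$ is thus the stabilizer of the ordered configuration $(\ell,P_1,P_2)$, not of the set $S$. So to complete your argument you must include the swapping coset, and the order you will obtain is $2hq(q-1)$, in conflict with the theorem as stated (the other three assertions of the theorem survive both your argument and the paper's).
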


\proof Let $\ell$ be a line and consider two points
$P_1,P_2\notin \ell$. It is straightforward to check that $A=\ell\cup
\{P_{1},P_{2}\}$ is a minimal $(1,2)$-saturating set. Consider
now two of such sets $A_{1}=\ell _{1}\cup \{P_{1},Q_{1}\}$ and
$ A_{2}=\ell _{2}\cup \{P_{2},Q_{2}\}$, with $P_{i},Q_{i}
\not\in \ell_i$ and let $X_{i},Y_{i}\in \ell _{i}$. The two
sets $A_{1}$ and $A_{2}$  are projective equivalent since
$\varphi :PG(2,q)\rightarrow PG(2,q)$ such that $\varphi
(P_{1})=P_{2}$, $\varphi (Q_{1})=Q_{2}$, $\varphi
(X_{1})=X_{2}$ and $ \varphi (Y_{1})=Y_{2}$ is a collineation
sending $A_{1}$ in $A_{2}$.

\begin{center}
\setlength{\unitlength}{0.8cm}
\begin{picture}(6,6)
\put(5.7,2.35){\Large $\ell$}
\put(1,1){\line (6,1){6}}
\put(1,1){\line (1,1){4.3}}
\put(1,1){\circle* {0.4}}
\put(2,1.1666){\circle* {0.4}}
\put(3,1.333){\circle* {0.4}}
\put(4,1.5){\circle* {0.4}}
\put(5,1.666){\circle* {0.4}}
\put(5.7,1.5){\circle* {0.08}}
\put(5.9,1.533){\circle* {0.08}}
\put(6.1,1.566){\circle* {0.08}}
\put(6.3,1.6){\circle* {0.08}}
\put(6.5,1.633){\circle* {0.08}}
\put(7,1.999){\circle* {0.4}}
\put(2.5,2.5){\circle* {0.4}}
\put(2,3){$P_{1}$}
\put(3.8,3.8){\circle* {0.4}}
\put(3.3,4.3){$P_{2}$}
\end{picture}
\end{center}

The line $\ell$ can be chosen in $q^2+q+1$ different ways. $P_{1}$ and $P_{2}$ can be taken in an arbitrary way in $PG(2,q)\setminus \ell$. Therefore $A$ can be chosen in $q^2(q^2-1)(q^2+q+1)$. Hence, its stabilizer in $PGL(3,q)$ has size

\begin{equation*}
\frac{|P\varGamma L(3,q)|}{hq^{2}(q^{2}-1)(q^{2}+q+1)}=\frac{
hq^{3}(q^{2}-1)(q^{3}-1)}{q^{2}(q^{2}-1)(q^{2}+q+1)}=hq(q-1).
\end{equation*}

The $\mu$-density of the code $C$ can be calculated by
\eqref{eq2_uniformity density} where clearly $n=q+3$,
$B_{3}(S)=1+\binom{q+1}{3}$, $\#PG(N,q)=q^{2}+q+1$.
\endproof

\begin{theorem}
\label{th:qp3secondo} Let $\ell$ be a line and $P,Q,R,S,T$
points such that $P,R,S$ and $Q,R,T$ are collinear, and $P,Q\in
\ell$. Then $A=(\ell \setminus \{P,Q\}) \cup \{R,S,T\}\subset
PG(2,q)$ is a minimal $(1,2)$-saturating $(q+2)$-set for all
$q\geq 4$.
\end{theorem}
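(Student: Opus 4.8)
The plan is to fix homogeneous coordinates adapted to the configuration, say $R=(0:0:1)$, $P=(1:0:0)$, $Q=(0:1:0)$, with $\ell$ the line $z=0$. The collinearity hypotheses then force $S=(1:0:\sigma)$ on the line $PR$ and $T=(0:1:\tau)$ on the line $QR$, and a diagonal coordinate change normalizes $\sigma=\tau=1$. Under the standing nondegeneracy assumption that $R\notin\ell$ (so that $S,T\notin\ell$ and $R,S,T$ are distinct, the figure not collapsing onto $\ell$), the set $\ell\setminus\{P,Q\}$ contributes $q-1$ points and $R,S,T$ three more, giving $|A|=q+2$. Condition (M1) is immediate since $A$ contains the $q-1\geq 3$ collinear points of $\ell\setminus\{P,Q\}$ together with $R\notin\ell$, hence spans $PG(2,q)$; (M2) holds because $|A|=q+2<q^2+q+1$ for $q\geq 4$.

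The heart of the argument is (M3) with $\mu=2$: every point not in $A$ must lie on at least two secants, counted with multiplicity. The two points of $\ell$ outside $A$, namely $P$ and $Q$, are handled at once by the line $\ell$ itself, which is a $(q-1)$-secant of multiplicity $\binom{q-1}{2}\geq\binom{3}{2}=3$ for $q\geq 4$. For a point $U\notin\ell$ the first secant is produced by the distinguished point $R$: I would prove the structural fact that \emph{every one of the $q+1$ lines through $R$ is a $2$-secant of $A$}. Indeed a line through $R$ meets $\ell$ in a unique point; if that point lies in $\ell\setminus\{P,Q\}\subset A$ the line is a $2$-secant, while the two exceptional lines $RP$ and $RQ$ are $2$-secants precisely because $S\in RP$ and $T\in RQ$ by the collinearity hypotheses. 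Consequently the line $RU$ always covers $U$ once.

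Producing the \emph{second} secant through each $U\notin\ell\cup\{S,T\}$ is the step I expect to be the main obstacle. The natural approach is to run the same pencil analysis at $S$ and at $T$: a line through $S$ is a $2$-secant unless it meets $\ell$ at $Q$ (the line $SR=PR$, which meets $\ell$ at $P$, already contains $R$), and symmetrically a line through $T$ fails to be a secant only along the line $TP$. Thus $U$ could fail to gain a second secant only if it lies simultaneously on the tangent through $S$ meeting $\ell$ at $Q$ and on the tangent through $T$ meeting $\ell$ at $P$; these two lines meet in a single point $D$. The crux of the proof is therefore to rule out $D$ as a genuinely singly covered point, i.e.\ to show that the stated incidences among $P,Q,R,S,T$ (together with $q\geq 4$) force $D$ either to coincide with one of $R,S,T$ or to lie on a further secant such as $ST$. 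This is the delicate point that pins down the admissible mutual positions of the five points; all remaining $U$ acquire their second secant routinely from the pencil through $S$ or through $T$.

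Finally, for minimality I would verify that deleting any single point of $A$ destroys double covering. The clean way is to exhibit, for each point $x\in A$, an external point $U_x$ that is covered \emph{exactly} twice and has $x$ on one of its two secants: removing $R$, $S$, or $T$ then leaves $U_x$ on only one secant, and removing a point $L\in\ell\cap A$ uncovers a point of the $S$- or $T$-pencil that used $L$ as its second secant. I expect the cardinality count, the spanning condition, and the $R$-pencil step to be entirely routine; the genuine difficulty, and the only place where the exact configuration is essential, is the second-secant coverage of the off-$\ell$ points, that is, showing no external point remains singly covered.
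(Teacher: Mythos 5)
Your reduction is right as far as it goes: after normalizing $P=(1{:}0{:}0)$, $Q=(0{:}1{:}0)$, $R=(0{:}0{:}1)$, $S=(1{:}0{:}1)$, $T=(0{:}1{:}1)$, $\ell:z=0$ (and all admissible configurations are projectively equivalent to this one, since the diagonal torus stabilizing $P,Q,R,\ell$ moves $S$ and $T$ independently along $PR$ and $QR$), every line through $R$ is a secant, each of the pencils through $S$ and through $T$ has exactly one non-secant line ($SQ$ and $TP$ respectively), and the whole of condition (M3) reduces to the single point $D=SQ\cap TP=(1{:}1{:}1)$. But you stop precisely there, declaring the treatment of $D$ to be ``the crux'' without supplying it. That is a genuine gap, and worse, it cannot be closed: $D$ really is covered only once. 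The $q+1$ lines through $D$ are $DR=\{x=y\}$, which meets $A$ in exactly the two points $R$ and $(1{:}1{:}0)$, contributing multiplicity $1$; $DS=SQ$ and $DT=TP$, which are tangents, since $P,Q\notin A$ and a second point of $\{R,S,T\}$ on either line would force $S\in QR$ or $T\in PR$, i.e.\ $S=R$ or $T=R$; and $q-2$ further lines, each meeting $A$ in at most the one point where it crosses $\ell$. The only possible rescue, $D\in ST$, fails in every characteristic because $ST$ has equation $x+y-z=0$ and this evaluates to $1\neq 0$ at $(1,1,1)$. Hence $A$ is never $(1,2)$-saturating, and the statement as printed is false for every $q\geq 4$ (direct enumeration for $q=4$ and $q=5$ confirms that $(1{:}1{:}1)$ lies on exactly one secant, of multiplicity $1$).

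You should know that the paper's own proof commits exactly the oversight you flagged rather than repairing it: it asserts that the points of the line $PT$ are covered twice ``by the lines through $R$ and $S$ and the points of $\ell\setminus\{P\}$'', but for $D=PT\cap SQ$ the only line of the $S$-pencil through $D$ is $SQ$, which meets $A$ only in $S$ because $Q\notin A$, so it contributes nothing. Your instinct that this is ``the delicate point that pins down the admissible mutual positions of the five points'' was therefore exactly right; the configuration of Theorem \ref{th:qp2primo}, with $R,S,T$ collinear with $P$, is the one for which the analogous pencil argument does go through, since there at most one of the three distinct lines $UR,US,UT$ can pass through $Q$. Your cardinality count, the spanning argument, the coverage of $P$ and $Q$ by the $(q-1)$-secant $\ell$, and your minimality strategy are all sound, but they are moot until the hypotheses are amended (for instance, so as to force $SQ\cap TP$ onto a secant) --- no completion of your plan, or of the paper's, can prove the theorem as stated.
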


\proof
All the points on the line $PR$ are covered
once by $RS$ and once by the lines joining $T$ and the points of $\ell \setminus \{P\}$. The points of the line $PT$ are covered twice by the lines
through $R$ and $S$ and the the points of $\ell \setminus \{P\}$. A similar argument holds for the points on the lines $QR$ and $QS$. The points
on the lines through $P$ distinct  from $\ell$, $PR$, and $PT$ are covered at
least two times by the lines through $T,R,S$ and the points of $\ell
\setminus \{P\}$. A similar argument holds for the points on the lines through $Q$ distinct from $\ell$, $QR$, and $QS$.

\begin{center}
\setlength{\unitlength}{0.8 cm}
\begin{picture}(6,6)
\put(4.4,0.7){\Large $\ell$}
\put(1,1){\line (6,1){7}}
\put(1,1){\line (1,1){4.3}}
\put(1,1){\circle {0.4}}
\put(0.5,0.4){$P$}
\put(2,1.1666){\circle* {0.4}}
\put(3,1.333){\circle* {0.4}}
\put(4,1.5){\circle* {0.4}}
\put(5,1.666){\circle* {0.4}}
\put(5.7,1.5){\circle* {0.08}}
\put(5.9,1.533){\circle* {0.08}}
\put(6.1,1.566){\circle* {0.08}}
\put(6.3,1.6){\circle* {0.08}}
\put(6.5,1.633){\circle* {0.08}}
\put(7,2){\circle* {0.4}}
\put(8,2.166){\circle {0.4}}
\put(8.3,2){$Q$}
\put(8,2.166){\line(-2,1) {5}}
\put(3,3){\circle* {0.4}}
\put(2,3){$S$}
\put(4.12,4.12){\circle* {0.4}}
\put(3.3,3.9){$R$}
\put(5.5,3.4){\circle* {0.4}}
\put(6,3.7){$T$}
\end{picture}
\end{center}

This example is minimal. In fact it is not possible to delete $T$ (resp. $S$) since the points on the line $SR$ (resp. $RT$) would not be covered twice. $A\setminus \{R\}$ does not cover $R$. Let $X\in \ell \setminus\{P,Q\}$, then $A\setminus \{X\}$ does not cover twice the point $TX\cap m$.
\endproof


\begin{theorem}
\label{th:qp2primo} Let $\ell$ be a line and $P,Q,R,S,T$ points
such that $P,R,S,T$ are collinear, and $P,Q\in \ell$. Then
$A=(\ell \setminus \{P,Q\}) \cup \{R,S,T\}$ is a minimal
$(1,2)$-saturating $(q+2)$-set for all $q\geq 4$.
\end{theorem}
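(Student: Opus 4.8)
The plan is to exploit the very rigid incidence structure. Write $m$ for the line through the collinear points $P,R,S,T$; then every point of $A$ lies on $\ell\cup m$, with $\ell\cap A=\ell\setminus\{P,Q\}$ (so $|\ell\cap A|=q-1$), $m\cap A=\{R,S,T\}$, and $\ell\cap m=\{P\}$; in particular $|A|=(q-1)+3=q+2$. First I would classify the secants of $A$: apart from $\ell$ itself (a $(q-1)$-secant, of multiplicity $\binom{q-1}{2}$) and $m$ (a $3$-secant, of multiplicity $\binom{3}{2}=3$), any other line meets each of $\ell,m$ in a single point and hence is a secant exactly when it joins a point of $\ell\setminus\{P,Q\}$ to one of $R,S,T$, in which case it is a plain $2$-secant.

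To prove that $A$ is $(1,2)$-saturating I would run a short case analysis over $X\in PG(2,q)\setminus A$. If $X\in\{P,Q\}$, then $\ell$ passes through $X$ and already contributes multiplicity $\binom{q-1}{2}\ge 2$ (this is where the hypothesis $q\ge 4$ enters, since $\binom{2}{2}=1$). If $X$ lies on $m$ but $X\notin\{R,S,T\}$, then $m$ contributes multiplicity $3$. In the remaining case $X\notin \ell\cup m$, every secant through $X$ is one of the three lines $XR,XS,XT$, and $XW$ (for $W\in\{R,S,T\}$) is a genuine secant unless $XW$ meets $\ell$ in $P$ or $Q$; meeting $\ell$ in $P$ is impossible for $X\notin m$, while $XW$ meets $\ell$ in $Q$ precisely when $X\in QW$. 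Since the three lines $QR,QS,QT$ are distinct and concurrent at $Q$, and $X\ne Q$, the point $X$ lies on at most one of them, so at least two of $XR,XS,XT$ are secants and $X$ is covered with multiplicity at least $2$.

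For minimality I would delete an arbitrary point $Z\in A$ and produce an uncovered point. If $Z\in\{R,S,T\}$, then $m$ becomes a mere $2$-secant of multiplicity $1$; as $q\ge 4$ there exists a point of $m\setminus\{P,R,S,T\}$, which was covered only by $m$, and it now fails to be doubly covered. The subtler case is $Z=U\in\ell\cap A$, where $\ell$ survives as a high-multiplicity secant so that $U$ and most points remain covered. Here I would set $X:=QR\cap US$ and check that $X\notin\ell\cup m$; in $A$ its only secants are $XS$ and $XT$ (the line $XR$ runs through $Q$, hence is not a secant), so $X$ is covered exactly twice, but deleting $U$ destroys $XS$ (which passes through $U$) while leaving $XT$ intact, so $X$ becomes covered only once.

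\textbf{Main obstacle.} The delicate point is precisely the minimality under deletion of a point $U$ on $\ell$: because $\ell$ remains a $(q-2)$-secant of large multiplicity, neither $U$ nor the bulk of the plane loses coverage, so a generic counting argument fails and one must hand-craft a single external point whose two covering secants are anchored at $Q$ and at $U$. The work is in verifying the general-position facts that make $X=QR\cap US$ legitimate — that $X$ avoids $\ell$ and $m$, that $XR$ is the unique non-secant among $XR,XS,XT$ in $A$, and that the surviving secant $XT$ does not also pass through $U$ — all of which follow from $\ell\cap m=\{P\}$ together with the distinctness of $R,S,T$.
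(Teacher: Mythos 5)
Your proof is correct and follows essentially the same route as the paper's: for saturation, both arguments reduce to the observation that for a point off $\ell\cup m$ at most one of the three candidate secants to $R,S,T$ can be killed by passing through $Q$, and for minimality both exhibit a point whose double coverage is destroyed (your explicit witness $QR\cap US$ plays the same role as the paper's under-covered point on the line $XT$). Your write-up is in fact somewhat more complete than the paper's, since you explicitly verify the coverage of $P$, $Q$ and of the points of $m\setminus A$, and you check all the general-position facts for the witness point.
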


\proof Let $s$ be a line through $Q$ different from $\ell$ and
let $m$ be the line containing $R,S,T$.

Let $X=m\cap s$. If $X\in \{R,S,T\}$, then every point of $s \setminus \{Q\}$ is covered twice by the lines through the points $ \{R,S,T\} \setminus \{X\}$ and the the $q-1$ points of $\ell$. If $X\notin \{R,S,T\}$, then every point of $s \setminus \{Q,X\}$ is covered three times by the lines through $R,S,T$ and the $q-1$ points of $ \ell$.

\begin{center}
\setlength{\unitlength}{0.8cm}
\begin{picture}(6,6)
\put(5.7,2.35){\Large $\ell$}
\put(5.2,4.3){\Large $m$}
\put(1,1){\line (6,1){6}}
\put(1,1){\line (1,1){4.3}}
\put(1,1){\circle {0.4}}
\put(0.5,0.5){$P$}
\put(2,1.1666){\circle* {0.4}}
\put(3,1.333){\circle* {0.4}}
\put(4,1.5){\circle* {0.4}}
\put(5,1.666){\circle* {0.4}}
\put(5.7,1.5){\circle* {0.08}}
\put(5.9,1.533){\circle* {0.08}}
\put(6.1,1.566){\circle* {0.08}}
\put(6.3,1.6){\circle* {0.08}}
\put(6.5,1.633){\circle* {0.08}}
\put(7,2){\circle {0.4}}
\put(7.2,2.3){$Q$}
\put(2.5,2.5){\circle* {0.4}}
\put(2,3){$R$}
\put(3.8,3.8){\circle* {0.4}}
\put(3.3,4.3){$S$}
\put(5.1,5.1){\circle* {0.4}}
\put(4.6,5.6){$T$}
\end{picture}
\end{center}

It is not possible to delete $R,S,T$ since in this case the points on $m$ are covered only once. Also, it is not possible to delete a point $X\in \ell$, since in this case in the line $XT$ only $(q-2)$ points are covered twice. Hence $A$ is a minimal $(1,2)$-saturating set of size $(q+2)$.
\endproof

\subsection{$(1,\mu)$-saturating sets and partitions of $PG(2,q)$ in Singer point-orbits}\label{subsec_Singer} 

In \cite{BDGMP-AMC2015,DGMP-Petersb2009,DFGMP-ConfigParam,DGMP-GC},
partitions of $PG(2,q)$ by Singer subgroups are considered.
Methods of \cite{DFGMP-ConfigParam,DGMP-GC}, allow us to
represent an incidence matrix of the plane $PG(2,q)$ as a BDC\
matrix defined below. We present some new results; see
\cite[Sec. 7.3]{DGMP-GC} for comparison. We recall the
following definition.
\begin{definition}
\label{def8_block double-circul}\cite{DFGMP-ConfigParam} Let
$v=td.$ A $ v\times v$ matrix $\mathbf{A}$ is said to be
\emph{block double-circulant} \emph{matrix }(or \emph{BDC
matrix}) if
\begin{equation}
\mathbf{A}=\left[ \renewcommand{\arraystretch}{0.85}
\begin{array}{cccc}
\mathbf{C}_{0,0} & \mathbf{C}_{0,1} & \ldots & \mathbf{C}_{0,t-1} \\
\mathbf{C}_{1,0} & \mathbf{C}_{1,1} & \ldots & \mathbf{C}_{1,t-1} \\
\vdots & \vdots & \vdots & \vdots \\
\mathbf{C}_{t-1,0} & \mathbf{C}_{t-1,1} & \ldots & \mathbf{C}_{t-1,t-1}
\end{array}
\right] ,\text{ }  \label{eq8 _block-circulant}
\end{equation}
\begin{equation}
\mathbf{W(A)}=\left[ \renewcommand{\arraystretch}{0.80}
\begin{array}{ccccccc}
w_{0} & w_{1} & w_{2} & w_{3} & \ldots & w_{t-2} & w_{t-1} \\
w_{t-1} & w_{0} & w_{1} & w_{2} & \ldots & w_{t-3} & w_{t-2} \\
w_{t-2} & w_{t-1} & w_{0} & w_{1} & \ldots & w_{t-4} & w_{t-3} \\
\vdots & \vdots & \vdots & \vdots & \vdots & \vdots & \vdots \\
w_{1} & w_{2} & w_{3} & w_{4} & \ldots & w_{t-1} & w_{0}
\end{array}
\right] ,  \label{eq8_block-circulant-weights}
\end{equation}
where $\mathbf{C}_{i,j}$ is a \emph{circulant} $d\times d$
0,1-matrix for all $i,j$; submatrices $\mathbf{C}_{i,j}$ and
$\mathbf{C}_{l,m}$ with $ j-i\equiv m-l\pmod t$ have equal
weights; $\mathbf{W(A)}$ is a \emph{circulant} $t\times t$
matrix whose entry in a position $i,j$ is the \emph{weight} of
$\mathbf{C}_{i,j}$. $\mathbf{W(A)}$ is called a \emph{weight
matrix} of $\mathbf{A.}$ The vector
$\overline{\mathbf{W}}\mathbf{(A)=(} w_{0},w_{1},\ldots
,w_{t-1}\mathbf{)}$ is called a \emph{weight vector} of $
\mathbf{A.}$
\end{definition}

Let $q^{2}+q+1=dt.$ Then, by using the cyclic Singer group of
$PG(2,q)$, the incidence matrix of the plane $PG(2,q)$ can be
constructed as a BDC matrix $\mathbf{A}$ of the form (\ref{eq8
_block-circulant}). In this plane, we number the points
$P_{1},\ldots ,P_{q^{2}+q+1}$ and the lines $\ell _{1},\ldots
,\ell _{q^{2}+q+1}$ so that $P_{i}$ corresponds to the $i$-th
column of $\mathbf{A}$ and $\ell _{i}$ corresponds to the
$i$-th row of $\mathbf{A.}$ Denote by
$\mathbf{P}_{v}=\{P_{dv+1},\ldots ,P_{dv+d}\},$ $0\leq v\leq
t-1,$ the point set corresponding to the $ (v+1)$-th block
column of $\mathbf{A.}$ Let $\mathbf{L}_{u}=\{\ell
_{du+1},\ldots ,\ell _{du+d}\},$ $0\leq u\leq t-1,$ be the line
set corresponding to the $(u+1)$-th block row of $\mathbf{A.}$
Here and further addition and subtraction of indices are
expressed modulo $t.$

We give a development of \cite[Lemma 7.7]{BDGMP-AMC2015}.
\begin{lemma}
\label{lem bdc}Let $1\leq m\leq t-1.$ An $md$-set
$$\mathbf{P}^{(m)}=\mathbf{P }_{0}\cup \mathbf{P}_{1}\cup \ldots
\cup \mathbf{P}_{m-1}$$ corresponding to the first $md$ columns
of $\mathbf{A}$ in \eqref{eq8 _block-circulant} is a $(1,\mu
)$-saturating set $S$ in $PG(2,q)$ with
\begin{align}\label{eq8_Nvm}
&\mu =\min_{v}N_{v}^{(m)},\text{ }1\leq m\leq
v\leq t-1,\\
&N_{v}^{(m)}=\sum_{u=0}^{t-1}w_{t-u+v}\binom{w_{u}^{(m)}}{2}\geq
0,~w_{u}^{(m)}=\sum_{j=0}^{m-1}w_{t-u+j}.
\nonumber
\end{align}
Moreover, an $[md,md-3,3]_{q}2$ code $C_{S}$ corresponding to
$S$ is a $(2,\mu)$-MCF code with minimum distance $d=3$ and
$\mu$-density
\begin{align}\label{eq_mudensBDC}
\gamma_{\mu}(C_{S},2)=\frac{1}{\mu}\cdot\frac{\sum\limits_{v=m}^{t-1}N_{v}^{(m)}}{t-m}.
\end{align}
\end{lemma}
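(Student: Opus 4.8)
The plan is to understand the BDC structure concretely: the columns of $\mathbf{A}$ are partitioned into $t$ blocks $\mathbf{P}_0,\ldots,\mathbf{P}_{t-1}$ of $d$ points each, and because $\mathbf{A}$ is block double-circulant, the incidence pattern between point-blocks and line-blocks is governed entirely by the weight vector $\overline{\mathbf{W}}(\mathbf{A})=(w_0,\ldots,w_{t-1})$. The key combinatorial fact I would establish first is how many points of the chosen set $\mathbf{P}^{(m)}=\mathbf{P}_0\cup\cdots\cup\mathbf{P}_{m-1}$ lie on a given line. By the circulant structure, any line in block-row $\mathbf{L}_u$ meets block-column $\mathbf{P}_{j}$ in exactly $w_{(j-u)\bmod t}$ points, so summing over $j=0,\ldots,m-1$ shows that each such line contains exactly $w_u^{(m)}=\sum_{j=0}^{m-1}w_{t-u+j}$ points of $\mathbf{P}^{(m)}$ — independent of which line in the block-row we pick. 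This uniformity within each block-row is the crucial feature that makes the counts tractable.

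Next I would count secants through a point $Q$ lying in block-column $\mathbf{P}_v$ with $v\geq m$ (so $Q\notin S$), using Condition (M3) and Proposition~\ref{prop optimal (1,mu) OS}(ii). Through $Q$ pass the $q+1$ lines of $PG(2,q)$, distributed among the block-rows $\mathbf{L}_u$ according to the incidence of $Q$ with those rows; the number of lines through $Q$ in block-row $\mathbf{L}_u$ equals $w_{(v-u)\bmod t}=w_{t-u+v}$ by the same double-circulant symmetry. Each such line is a $w_u^{(m)}$-secant of $S$ and thus carries $\binom{w_u^{(m)}}{2}$ pairs of points of $S$, counted with multiplicity as required by (M3). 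Summing over all block-rows gives exactly
\begin{equation*}
N_v^{(m)}=\sum_{u=0}^{t-1}w_{t-u+v}\binom{w_u^{(m)}}{2}
\end{equation*}
secants through $Q$, and this quantity depends only on the block-column index $v$, not on the individual point $Q$. Taking the minimum over admissible $v$, namely $m\leq v\leq t-1$, yields the saturation parameter $\mu=\min_v N_v^{(m)}$ as claimed; the condition $N_v^{(m)}\geq 0$ together with (M2) being satisfiable for some $Q$ confirms that $S$ is genuinely a $(1,\mu)$-saturating set. Verifying (M1), that $\mathbf{P}^{(m)}$ generates $PG(2,q)$, is routine since $md$ columns of a nonsingular incidence configuration span the plane.

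For the density formula I would apply Proposition~\ref{prop optimal (1,mu) OS}(i): the product $\mu\gamma_\mu(C_S,2)$ equals the \emph{average} number of secants through a point outside $S$. The points outside $S$ are precisely those in $\mathbf{P}_v$ for $m\leq v\leq t-1$, that is $(t-m)d$ points falling into $t-m$ classes of $d$ points each, with each class contributing the common value $N_v^{(m)}$. Hence the average is $\frac{1}{(t-m)d}\sum_{v=m}^{t-1} d\,N_v^{(m)}=\frac{1}{t-m}\sum_{v=m}^{t-1}N_v^{(m)}$, and dividing by $\mu$ gives the stated
\begin{equation*}
\gamma_\mu(C_S,2)=\frac{1}{\mu}\cdot\frac{\sum_{v=m}^{t-1}N_v^{(m)}}{t-m}.
\end{equation*}
The assertion that the minimum distance is $d=3$ follows because three collinear points of $S$ exist (any $w_u^{(m)}\geq 3$ forces this), so $d(C_S)=3$ rather than $4$. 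The main obstacle I anticipate is bookkeeping the index arithmetic modulo $t$ correctly — in particular matching ``number of points of $\mathbf{P}_j$ on a line of $\mathbf{L}_u$'' with ``number of lines of $\mathbf{L}_u$ through a point of $\mathbf{P}_v$,'' which both reduce to the same circulant entry but with indices transposed; getting the subscript $t-u+v$ versus $t-u+j$ consistent is the delicate point, and it hinges on the symmetry that $\mathbf{A}$ is the incidence matrix of a \emph{self-dual} configuration under the Singer action.
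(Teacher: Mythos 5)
Your proposal is correct and follows essentially the same route as the paper: identify each line of $\mathbf{L}_u$ as a $w_u^{(m)}$-secant of $\mathbf{P}^{(m)}$, count the lines of each block-row through a point of $\mathbf{P}_v$ via the circulant block weights to get $N_v^{(m)}$, minimize over $v\geq m$ for $\mu$, and average over the $t-m$ uncovered blocks via Proposition~\ref{prop optimal (1,mu) OS}(i) for the density. Your version merely makes explicit the index bookkeeping (row sums versus column sums of the circulant blocks $\mathbf{C}_{u,v}$) that the paper leaves implicit.
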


\begin{proof}
Every line of $\mathbf{L}_{u}$ is a $w_{u}^{(m)}$-secant of
$\mathbf{P} ^{(m)}.$ Let $v\geq m.$ Every point of
$\mathbf{P}_{v}$ is covered by $ w_{t-u+v}$ specimens of
$w_{u}^{(m)}$-secants of $\mathbf{P}^{(m)}$ with multiplicity
$\binom{w_{u}^{(m)}}{2}$ for $0\leq u\leq t-1.$ So, every point
of $\mathbf{P}_{v},$ $m\leq v\leq t-1,$ is covered
 by $N_{v}^{(m)}$ secants of $\mathbf{P}^{(m)}$. This implies
 \eqref{eq8_Nvm} and, together with Proposition \ref{prop optimal (1,mu) OS}, gives rise to \eqref{eq_mudensBDC}.
\end{proof}

By \cite[Th. 7.8]{BDGMP-AMC2015}, the Singer partition of
$PG(2,q)$ gives a $(1,\mu)$-OS set and the corresponding
$(1,\mu)$-APMCF code  in the following cases:
\begin{itemize}
  \item $m=t-1$ for an arbitrary weight vector;
  \item $1\le m\le t-1$ and the weight vector has the form
      $\overline{\mathbf{W}}\mathbf{ (A)}=(w_{0},w,\ldots
      ,w)$;
  \item $m=1$ and the weight vector contains exactly two
      distinct weights.
\end{itemize}

\begin{example}\label{examp 5.5 OS}
All the  $(1,\mu)$-saturating $md$-sets below are optimal by
\cite[Th. 7.8]{BDGMP-AMC2015}. The corresponding
$[md,md-3,3]_{q}2$ codes $C$ are $(1,\mu)$-APMCF with
$\gamma_{\mu}(C,2)=1$. The multiplicity $\mu$ has been
calculated by \eqref{eq8_Nvm} or by \cite[Th.
7.8]{BDGMP-AMC2015}.
\begin{itemize}
  \item [(i)] \label{ex8_Baer}Let $q$ be square. Then
      $(q^{2}+q+1)=(q+\sqrt{q}+1)(q-\sqrt{q }+1).$ Let
      $d=q+\sqrt{q}+1,$ $t=q-\sqrt{q}+1.$ There is a
      partition of $ PG(2,q)$ such that all the subsets
      $\mathbf{P}_{v}$ are disjoint Baer subplanes. We have
      $\overline{\mathbf{W}}\mathbf{(A)}=(\sqrt{q}+1,1,\ldots
      ,1)$ whence
\begin{equation*}
\mu =m\binom{\sqrt{q}+m}{2}+(q+1-m) \binom{m}{2},~~1\le m\le t-1.  \label{eq8_mu_Baer}
\end{equation*}
The case $m=1$ coincides with code of Proposition
  \ref{th_BaerSubplane}.
  \item [(ii)] Let $q=p^{4v+2},$ $p\equiv 2 \bmod 3$. Then
      by \cite[Prop. 4]{DGMP-GC},
\begin{equation*}
t=3,~d=\frac{q^{2}+q+1}{3},~w_{0}=\frac{q+2\sqrt{q}+1}{3},~w_{1}=w_{2}=w=\frac{
q-\sqrt{q}+1}{3}.
\end{equation*}
For $m=1$ we have
\begin{equation*}
\mu =w\binom{w_{0}}{2}+\binom{w}{2}(w_{0}+w)=
\frac{1}{18}(q^{3}-q\sqrt{q}-2).
\end{equation*}
  \item [(iii)]Let $q=p^{4v},$ $p\equiv 2 \bmod 3$. Then by
      \cite[Prop. 4]{DGMP-GC},
\begin{equation*}
t=3,~d=\frac{q^{2}+q+1}{3},~w_{0}=\frac{q-2\sqrt{q}+1}{3},~w_{1}=w_{2}=w=\frac{
q+\sqrt{q}+1}{3}.
\end{equation*}
For $m=1$ we have
\begin{equation*}
\mu =w\binom{w_{0}}{2}+\binom{w}{2}(w_{0}+w)=
\frac{1}{18}(q^{3}+q\sqrt{q}-1).
\end{equation*}
  \item [(iv)] Let $q=p^{2c}$. Let $t$ be a prime divisor
      of $q^{2}+q+1$. Then $ t$ divides either
      $q+\sqrt{q}+1$ or $q-\sqrt{q}+1.$ Assume that $p\pmod
      t$ is a generator of the multiplicative group of
      ${\mathbb{Z}}_{t}$. By \cite[ Prop. 6]{DGMP-GC}, in
      this case  $w_{0}=(q+1\pm (1-t) \sqrt{q})/t$,
      $w_{1}=\ldots =w_{t-1}=w=(q+1\pm \sqrt{q})/t.$ For
      $m=1$ we have
\begin{equation*}
\mu =w\binom{w_{0}}{2}+\binom{w}{2}
(w_{0}+w(t-2))=\frac{q^{3}\pm (t-2)q\sqrt{q}-t+1}{2t^{2}}.
\end{equation*}

Note that the hypothesis that $p\pmod t$ is a generator of
the multiplicative group of ${\mathbb{Z}}_{t}$ holds, e.g.
in the following cases: $q=3^{4},$ $t=7;$ $q=2^{8},$
$t=13;$ $q=5^{4},$ $t=7;$ $q=2^{12},$ $ t=19;$ $q=3^{8},$
$t=7;$ $q=2^{16},$ $t=13;$ $q=17^{4},$ $t=7;$ $p\equiv 2
\pmod t,$ $t=3.$
\item [(v)] Let $q=125$. By \cite[Tab. 1]{DGMP-GC}, there
    is the partition with $t=19$, $d=829$,
    $\overline{\mathbf{W}}\mathbf{(A)}=(4,9,9,9,9,
    4,4,9,9,4,9,9,4,4,4,4,9,4,9)$. For $m=1$ we have
    $\mu=2706$.
\end{itemize}
\end{example}

 Partitions providing
$(1,\mu)$-OS sets are not always possible. But, as rule, the
partitions provide ``good'' $(1,\mu)$-saturating sets such that
the corresponding $(1,\mu)$-MCF codes have $\mu$-density  $\gamma_{\mu}(C,2)$ of
order of magnitude  less than $1+\frac{1}{cq}, c\ge1$. In the
following we give examples of ``good'' $(1,\mu)$-saturating
sets.

\begin{example}
 Using
    the approach of \cite{DGMP-GC}, we obtain by computer
    search  partitions with $t=3$ and with three distinct
    values of $w_{i}$, see also \cite[Table 1]{DGMP-GC}. We
    take $ m=1$ and $n=d$. The values of $q,w_{i},n,\mu ,$
    and $\gamma_{\mu}(C,2)$ are given in Table \ref{Table:three weights}.
    The values of $\mu$
    and $\gamma_{\mu}(C,2)$ are obtained by \eqref{eq8_Nvm} and
    \eqref{eq_mudensBDC} respectively. In the last column  we write relation of the
    form $1+\frac{1}{cq}$ such that
    $\gamma_{\mu}(C,2)<1+\frac{1}{cq}$. One can see that $1\le
    c\le35$.
\begin{table}
\caption {Values of $\mu $ and $\mu $-density for partitions
with three distinct values of $w_{i}$ }\label{Table:three
weights}
\begin{center}
\begin{tabular}{rccc|rr|cl}
\hline
$q$ & $w_{0}$ & $w_{1}$ & $w_{2}$ & $n=d$ & $\mu$ & $\gamma_{\mu}(C,2)$ &$<$\\ \hline
7 &1&4&3&19&18& 1.0833&$1+1/q$\\
13 & 4 & 7 & 3 & 61 & 117 & 1.0256&$1+1/3q$  \\
19 & 4 & 7 & 9 & 127 & 375 & 1.0200&$1+1/2q$  \\
31 & 7 & 12 & 13 & 331 & 1656 & 1.0045&$1+1/7q$   \\
37 & 13 & 9 & 16 & 469 & 2796 & 1.0075 &$1+1/3q$  \\
43 & 19 & 13 & 12 & 631 & 4392 & 1.0024&$1+1/9q$  \\
49 & 13 & 21 & 16 & 817 & 6498 & 1.0046 &$1+1/4q$  \\
61 & 16 & 25 & 21 & 1261 & 12570 & 1.0036&$1+1/4q$   \\
67 & 28 & 19 & 21 & 1519 & 16653 & 1.0019&$1+1/7q$   \\
73 & 19 & 28 & 27 & 1801 & 21627 & 1.0008&$1+1/16q$  \\
79 & 31 & 21 & 28 & 2107 & 27363 & 1.0019&$1+1/6q$   \\
97 & 28 & 31 & 39 & 3169 & 50601 & 1.0013&$1+1/7q$   \\
103 & 28 & 37 & 39 & 3571 & 60708 & 1.0008&$1+1/11q$  \\
127 & 36 & 49 & 43 & 5419 & 113673& 1.0012&$1+1/6q$   \\
139 & 39&  49 & 52 & 6487 & 149175& 1.0007&$1+1/11q$ \\
157 & 61 & 48 & 49 & 8269 & 214848 & 1.0002&$1+1/35q$   \\
163 & 63 & 49 & 52 & 8911 & 240387 & 1.0005&$1+1/12q$   \\
\hline
\end{tabular}
\end{center}
\end{table}
\end{example}

\section{Classification of minimal and optimal
$(1,\mu)$-saturating sets in $PG(2,q)$ }\label{sec_classific}

We performed a computer based search for minimal $(1,2)$-saturating sets. The results are collected in Table
\ref{Table:1}. In the 2-nd column, the values $\overline{\ell
}(2,3,q)$ of the smallest cardinality of a 1-saturating set in
$PG(2,q)$, taken from \cite {BFMP-JG2013,DGMP-AMC2011}, are
given. The cases when $\ell (2,3,q)= \overline{\ell }(2,3,q)$
are marked by the dot \textquotedblleft $\centerdot
$\textquotedblright . In the 5-th column, we give some values
of $n$ for which minimal $(1,2)$-saturating $n$-sets in
$PG(2,q)$ exist. For $3\leq q\leq 17$, we have found the
\emph{complete spectrum} of sizes~$n$. This situation is marked
by the dot \textquotedblleft $\centerdot $ \textquotedblright .
In the 2-nd and the 5-th columns, the superscript notes the
numbers of nonequivalent sets of the corresponding size. For
$3\leq q\leq 9$, we obtain the \emph{complete classification}
of the spectrum of sizes $n$ of minimal $(1,2)$-saturating
$n$-sets in $PG(2,q)$. This situation is marked by the asterisk
*. In the 3-rd column the trivial lower bound
\eqref{eq4_triv_lower_bound} is given. Finally, the size
$q+\mu+1=q+3$ of the largest minimal (1,2)-saturating set in
$PG(2,q)$, see Proposition \ref{prop new_bounds}(ii), is written
in the 4-th column.

\begin{center}
\begin{table}
\caption{The number of nonequivalent minimal (1,2)-saturating
$n$-sets in $PG(2,q)$ and the spectrum of sizes
$n$.}\label{Table:1}
 $\renewcommand\arraystretch{1.0}
\begin{array}{c|c|c|c|c}
\hline
q & \overline{\ell }(2,3,q) & \left\lceil 2\sqrt{q}\right\rceil
\phantom{^{I^{I}}} & q+\mu+1 & \text{Spectrum of }n \\ \hline
3 & 4^{1}\centerdot & 4 & 6 & 6^{4}\centerdot \ast \\
4 & 5^{1}\centerdot & 4 & 7 & 6^{2}7^{5}\centerdot \ast \\
5 & 6^{6}\centerdot & 5 & 8 & 6^{1}7^{4}8^{18}\centerdot \ast \\
7 & 6^{3}\centerdot & 6 & 10 & 8^{13}9^{564}10^{424}\centerdot \ast \\
8 & 6^{1}\centerdot & 6 & 11 & 8^{2}9^{154}10^{3372}11^{611}\centerdot \ast
\\
9 & 6^{1}\centerdot & 6 & 12 & 8^{1}9^{57}10^{12145}11^{76749}12^{3049}
\centerdot \ast \\ \hline
11 & 7^{1}\centerdot & 7 & 14 & 10^{1348}[11-14]\centerdot \phantom{^{I^{I}}}
\\
13 & 8^{2}\centerdot & 8 & 16 & 10^{2}11^{50794}[12-16]\centerdot \\
16 & 9^{4}\centerdot & 8 & 19 & 11^{52}[12-19]\centerdot \\
17 & 10^{3640}\centerdot & 9 & 20 & [12-20]\centerdot \\ \hline
19 & 10^{36}\centerdot \phantom{^{I^{I}}} & 9 & 22 & [13-22] \\
23 & 10^{1}\centerdot & 10 & 26 & [15-26] \\
25 & 12 & 10 & 28 & [17-28] \\
27 & 12 & 11 & 30 & [17-30] \\
29 & 13 & 11 & 32 & [19-32] \\
31 & 14 & 12 & 34 & [19,21-34] \\
32 & 13 & 12 & 35 & [20-35] \\
37 & 15 & 13 & 38 & [23,26-40] \\
41 & 16 & 13 & 44 & [25,29-44] \\
43 & 16 & 14 & 46 & [25,30-46] \\
47 & 18 & 14 & 50 & [27,34-50] \\
49 & 18 & 14 & 52 & [29,34-52] \\ \hline
\end{array}
$
\end{table}
\end{center}

Using different constructions we obtained (1,2)-saturating $n$-sets in $PG(2,q)$ having several points on a conic, with sizes described in Table \ref{Table:2}.

\begin{center}
\begin{table}
\caption{Sizes of small (1,2)-saturating $n$-sets in $
PG(2,q)$.}\label{Table:2}
\begin{center}
$
\begin{array}{|c|@{\,\,}c@{\,\,}c@{\,\,}c@{\,\,}c@{\,\,}c@{\,\,}c@{\,\,}c@{\,\,}c@{\,\,}
c@{\,\,}c@{\,\,}c@{\,\,}c@{\,\,}c@{\,\,}c@{\,\,}c@{\,\,}c@{\,\,}c@{\,\,}c@{\,\,}c@{\,\,}c|}
\hline
q & 53 & 59 & 61 & 67 & 71 & 73 & 79 & 81 & 83 & 89 & 97 & 101 & 103 & 107 &
109 & 113 & 125 & 127 & 131 & 139 \\ \hline
n & 31 & 33 & 35 & 37 & 39 & 41 & 39 & 45 & 45 & 49 & 53 & 55 & 55 & 57 & 59
& 61 & 67 & 67 & 69 & 73 \\ \hline
\end{array}
$
\end{center}
\end{table}
\end{center}

The smallest cardinalities of $(1,2)$-saturating sets for each
$q$ in Table \ref{Table:1} and sizes $n$ in Table \ref{Table:2}
are smaller than $2\overline{\ell }(2,3,q)$. So, for $\mu =2$,
$r=3$, and $q$ from Tables \ref{Table:1} and~\ref{Table:2}, the
goal formulated at the end of Section~\ref{seztre} is achieved.

In Table \ref{Table:ClassificationMinmal}  a classification of
minimal $m$-saturating sets  for some values of $\mu  \leq (q +
1)\binom{q}{2}$, $q \leq 11$, is presented; the superscript
over a size indicates the number of distinct $\mu$-saturating
sets of that size (up to collineations). If the subscript is absent, then there exists at least a $(1,\mu)$-saturating set of that size.

\begin{center}
\begin{table}
\caption {Classification of minimal $(1,\mu)$-saturating sets
in $PG(2,q)$}\label{Table:ClassificationMinmal}
\begin{center}
\tabcolsep = 0.5 mm
\begin{tabular}{@{}c|c@{}|c|c|c|c|c|c|c@{}}
\hline
$\mu$&$3$&$4$&$5$&$6$&$7$&$8$&$9$&$10$\\
\hline
$q = 3$& $6^1 7^2$ &$ 7^1 8^2 $& $8^1 9^1 $& $9^3$& $9^1 10^1$ &$9^1 10^1$&$9^{1}$&$11^{1}$\\
\hline
$q = 4$& $6^1 7^2 8^7$&  $8^3 9^{10}$ & $9^6 10^7$ & $9^2 10^8 11^1$ & $10^2 11^{13}$ & $11^7 12^5$&$11^{1} 12^{16}$&$12^{5} 13^{5}$\\
\hline $q = 5$ &$8^5 96^5 $& $9^7 10^{133}$ &\begin{tabular}{c}
$10^{26}$\\$ 11^{162}$\\ \end{tabular} &\begin{tabular}{c}
$11^{121}$\\$ 12^{102}$\\ \end{tabular}&  $11^{3}
12^{361}$&\begin{tabular}{c}$12^{40}$\\$13^{437}$\\
\end{tabular}
&\begin{tabular}{c}$12^{3}13^{301}$\\$14^{28}$\\ \end{tabular}&\begin{tabular}{c}$13^{14}$\\$14^{759}$\\ \end{tabular}\\
\hline $q = 7$ & \begin{tabular}{c} $8^1 9^{10}$ \\
$10^{1506}$\\$ 11^{10014}$\end{tabular} &
 \begin{tabular}{c}$10^2$\\$ 11^{3167}$\\ $12^{67454}$\end{tabular} & \begin{tabular}{c}$11^2$\\$ 12^{11301}$\\$13^{239140}$\\
 \end{tabular}&\begin{tabular}{c}$12^{59}$\\$ 13^{83378}$\\ $14^{483925}$\end{tabular}
 &\begin{tabular}{c}$13^{430}$\\$ 14^{613065}$\\ $15^{518711}$\end{tabular}&
 \begin{tabular}{c}$14^{7418}$\\ $15^{2860573}$\\ $16$\end{tabular}&\begin{tabular}{c}$14^{8}$\\$15^{247080}$\\$16$ $17$\end{tabular}& \begin{tabular}{c}$15$\\$16$\\ $17$\end{tabular}\\\hline
 $q=8$&\begin{tabular}{c}$10^{137}$\\$ 11^{19606}$\\ $12^{40514}$\end{tabular}
 &\begin{tabular}{c}$10^1$\\$ 11^{89}$\\ $12^{63582}$\\$13^{522250}$\end{tabular}
 &\begin{tabular}{c}$10^1$\\ $12^{107}$\\$13^{239774}$\\$14^{2910961}$\end{tabular}
 &\begin{tabular}{c}$12^{4}$\\$13^{820}$\\$14^{3714769}$\\ $15$\end{tabular}&\begin{tabular}{c}$13^{3}$\\ $14^{2267544}$\\ $15$ $16$\end{tabular}&&\\\hline
 $q=9$&\begin{tabular}{c}$9^1$\\ $10^{15}$\\$11^{15351}$\\$12^{1249084}$\\$13^{596493}$\end{tabular}
 &\begin{tabular}{c}$10^1$\\
 $11^{3}$\\$12^{13809}$\\$13^{6382440}$\\$14$\end{tabular}&\begin{tabular}{c}$12^{3}$\\$13$\\ $14$\\ $15$\end{tabular}&&&&\\\hline
\hline\hline $\mu$&$11$&$12$&$13$&$14$&$15$
&$16$&$17$&$18$\\\hline
$q = 3$&$12^{1}$&$12^{1}$&-&-&-&-&-&-\\
\hline
$q=4$&$12^{1}13^{11}$&\begin{tabular}{c}$12^{1}13^{3}$\\$14^{3}$\\
\end{tabular}&$14^{10}$&$14^{4}15^{4}$&$14^{2}15^{5}$&$15^{4}16^{2}$&$15^{1}16^{4}$&$16^{5}$\\
\hline $q = 5$&\begin{tabular}{c}$13^{1}14^{198}$\\$15^{171}$\\
\end{tabular}&\begin{tabular}{c}$14^{3}$\\$15^{933}$\\
\end{tabular} &\begin{tabular}{c}$15^{159}$\\$16^{309}$\\
\end{tabular}&\begin{tabular}{c}$15^{7}$\\$16^{907}$\\
\end{tabular} &\begin{tabular}{c}$15^{2}16^{239}$\\$17^{210}$\\
\end{tabular}&\begin{tabular}{c}$16^{17}$\\$17^{741}$\\
\end{tabular}&
\begin{tabular}{c}$16^{2}17^{436}$\\$18^{104}$\\ \end{tabular}&\begin{tabular}{c}$16^{2}17^{22}$\\$18^{535}$\\ \end{tabular}\\
\hline\hline $\mu$&$19$&$20$&$21$&$22$&$23$
&$24$&$25$&$26$\\\hline
$q = 4$&$16^{2}$&$16^{1}17^{2}$&$16^{1}17^{2}$&$16^{1}18^{1}$&$16^{1}18^{1}$&$16^{1}18^{1}$&$18^{1}$&$19^{1}$\\
\hline $q = 5$&\begin{tabular}{c}$17^{2}$\\$18^{558}$\\
\end{tabular}&\begin{tabular}{c}$18^{99}$\\$19^{219}$\\ \end{tabular}&\begin{tabular}{c}$18^{7}$\\$19^{447}$\\ \end{tabular}&$19^{268}20^{6}$
&\begin{tabular}{c}$19^{18}$\\$20^{239}$\\ \end{tabular}&\begin{tabular}{c}$19^{3}$\\$20^{289}$\\ \end{tabular}&$20^{96}21^{14}$&$20^{4}21^{161}$\\
\hline\hline $\mu$&$27$&$28$&$29$&$30$&$31$
&$32$&$33$&$34$\\\hline
$q = 4$&$19^{1}$&$20^{1}$&$20^{1}$&$20^{1}$&-&-&-&-\\
\hline $q = 5$&\begin{tabular}{c}$20^{1}$\\$21^{172}$\\
\end{tabular} &\begin{tabular}{c}$20^{1}21^{39}$\\$22^{14}$\\ \end{tabular}&\begin{tabular}{c}$20^{1}21^{4}$\\$22^{77}$\\ \end{tabular}
&\begin{tabular}{c}$20^{1}21^{1}$\\$22^{88}$\\ \end{tabular}
 &\begin{tabular}{c}$22^{29}$\\$23^{8}$\\ \end{tabular}&\begin{tabular}{c}$22^{5}$\\$23^{32}$\\ \end{tabular}
 &\begin{tabular}{c}$22^{2}$\\$23^{38}$\\ \end{tabular}&\begin{tabular}{c}$22^{1}23^{18}$\\$24^{6}$\\ \end{tabular}\\
\hline\hline $\mu$&$35$&$36$&$37$&$38$&$39$
&$40$&$41$&$42$\\\hline
 $q = 5$&$23^{4}24^{16}$&$24^{22}$&$24^{16}25^{1}$&$24^{4}25^{7}$&$24^{1}25^{10}$&$25^{12}$&$25^{6}26^{1}$&$25^{2}26^{3}$\\
\hline\hline $\mu$&$43$&$44$&$45$&$46$&$47$
&$48$&$49$&$50$\\\hline
 $q = 5$&$25^{1}26^{4}$&$25^{1}26^{4}$&$25^{1}26^{1}$&$25^{1}27^{2}$&$25^{1}27^{2}$&$25^{1}27^{2}$&$25^{1}28^{1}$&$25^{1}28^{1}$\\
\hline\hline
$\mu$&$51$&$52$&$53$&$54$&$55$&$56$&$57\,|\,58$&$59\,|\,60$\\\hline
 $q = 5$&$27^{1}28^{1}$&$28^{2}$&$28^{1}$&$29^{1}$&$29^{1}$&$29^{1}$&$30^{1}|30^{1}$&$30^{1}|30^{1}$\\\hline
\end{tabular}
\end{center}
\end{table}
\end{center}

\begin{remark}\label{rem_minim=set}
By property (M3) of Definition \ref{def_rho_mu_sat}, see also
(M3) in Section \ref{seztre}, a $(1,\mu)$-saturating set is
also a $(1,\mu-k)$-saturating set for $1\le k\le \mu-1$.
Moreover, a minimal $(1,\mu)$-saturating set can also be a
minimal $(1,\mu-k)$-saturating set for $k=1,2,\ldots,\delta$,
$\delta\ge1$. This happens when removing any point from this
set we obtain a $(1,\mu-\delta-1)$-saturating set.
 For example, let $q=3$ and
consider a line $\ell$. Then $S=PG(2,3)\setminus \ell$ is a
$(1,9)$-saturating set and  removing any point from $S$ we
obtain a $(1,4)$-saturating set. So, $S$ is a minimal $(1,9)$-,
$(1,8)$-, $(1,7)$-, $(1,6)$-, and $(1,5)$-saturating set. This
example and many other such situations are written in Table
\ref{Table:ClassificationMinmal}.
\end{remark}

In Table \ref{Table:ClassificationOptimal} we give the
classification of optimal $(1,\mu)$-saturating sets
($(1,\mu)$-OS)in $PG(2,q)$. For such sets $S$, every point of
$PG(2,q)\setminus S$ is covered exactly $\mu$ times. An entry of the form $n^{t}_{\mu}$ means that there
exist $t$ projectively distinct optimal $(1,\mu)$-saturating
sets with size $n$. Entries provided by Propositions
\ref{th7_(n,s)arc} -- \ref{th_BaerSubplane} and \ref{th_optim},  Example \ref{examp 5.5 OS}, and Corollary \ref{Corollary} are written in bold font.
\begin{center}
\begin{table}
\caption {Classification of optimal $(1,\mu)$-saturating
$n$-sets in
$PG(2,q)$}\label{Table:ClassificationOptimal}
\begin{center}
\tabcolsep = 0.5 mm
\begin{tabular}{|c||c|}
\hline
$q$& $n^{t}_{\mu} \vphantom{H^{H^{H}}_{H_{H}}}$\\\hline
$3$&$\mathbf{7^{1}_{3}~7^{1}_{4}~9^{1}_{6}~10^{1}_{8}}~9^{1}_{9}~
\mathbf{10^{1}_{9}~11^{1}_{10}~12^{1}_{12}}~\vphantom{H^{H^{H}}_{H_{H}}}$ \\\hline
$4$&\begin{tabular}{c}$\mathbf{6^{1}_{3}~7^{1}_{3}~9^{1}_{4}}~9^{1}_{5}~
\mathbf{9^{1}_{6}}~11^{1}_{9}~\mathbf{12^{1}_{9}~12^{2}_{10}~13^{2}_{12}~14^{1}_{15}~15^{1}_{15}}~15^{1}_{16}$\\
$~15^{1}_{17}~ \mathbf{16^{1}_{18}~17^{1}_{21}~16^{1}_{24}~17^{1}_{24}~18^{1}_{24}~18^{1}_{25}~19^{1}_{27}~20^{1}_{30}} $
\end{tabular}
 \\\hline
$5$&\begin{tabular}{c}$
\mathbf{11^{1}_{5}}~14^{1}_{11}~\mathbf{15^{1}_{12}~16^{1}_{15}}~16^{1}_{18}~19^{1}_{22}~\mathbf{19^{2}_{23}}
~21^{1}_{27}~\mathbf{21^{2}_{30}}~22^{2}_{31}~22^{1}_{32}~23^{1}_{35}$\\
$~\mathbf{25^{1}_{40}}~25^{1}_{41}~25^{1}_{42}~\mathbf{26^{1}_{44}~27^{1}_{48}}~25^{1}_{50}
~\mathbf{26^{1}_{50}~27^{1}_{51}~28^{1}_{52}~28^{1}_{53}~29^{1}_{56}~30^{1}_{60}}$
\end{tabular}
\\\hline
\end{tabular}
\end{center}
\end{table}
\end{center}
\begin{observation}\label{observ}
For $q =3$, the (1,3)-OS of size 7 is 2 concurrent lines; the
(1,4)-OS of size 7 is contained in 3 concurrent lines. For $q
=4$ the (1,3)-OS of size 6 is the hyperoval, the (1,4)-OS of
size 7 is a complete 3-arc, cf. with Proposition
\ref{th7_(n,s)arc}.

In all these 4 cases, the points external to the $(1,\mu)$-OS,
say $S$, form a unique orbit of the stabilizer group of $S$.
\end{observation}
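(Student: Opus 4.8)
The plan is to verify the four configurations separately, since by the exhaustive classification recorded in Table~\ref{Table:ClassificationOptimal} each is the unique optimal set (up to collineation) of the stated size and multiplicity. Thus it suffices to (a) check that the named geometric object is a $(1,\mu)$-OS set with the claimed $\mu$, via a direct count of secants through an external point using the weight rule $m_\ell=\binom{\#(\ell\cap S)}{2}$ of Section~\ref{seztre}, and (b) determine the stabiliser of $S$ in $P\varGamma L(3,q)$ and show it acts transitively on $PG(2,q)\setminus S$.

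The OS verifications (a) are short counts. For two lines $\ell_1,\ell_2$ meeting at $P$ in $PG(2,3)$, a point $Q\notin S$ lies on $q+1=4$ lines: the line $QP$ meets $S$ only in $P$ and so has weight $\binom{1}{2}=0$, while each of the other $q=3$ lines meets $\ell_1$ and $\ell_2$ in two distinct points and so is a $2$-secant of weight $1$; the total is $q=3$, the same for every $Q$, which is exactly the OS condition with $\mu=3$. For the hyperoval $\mathcal{O}$ in $PG(2,4)$ the value $\mu=3$ is the instance $s=2$ of Proposition~\ref{th7_(n,s)arc}: having no tangents, $\mathcal{O}$ meets each of the $q+1=5$ lines through an external point in $0$ or $2$ points, giving exactly $(q+2)/2=3$ bisecants. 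The two remaining configurations are handled by the same bookkeeping of secant types.

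For the orbit statement (b) the hyperoval case is cleanest. The stabiliser of $\mathcal{O}$ acts on its six points as the full symmetric group $S_6$ (and is in fact isomorphic to $S_6$). The three bisecants through an external point $Q$ are distinct and pairwise share no point of $\mathcal{O}$, so they partition the six points of $\mathcal{O}$ into three pairs; this assigns to $Q$ a perfect matching of a $6$-set, and the assignment is a bijection between the $15$ external points and the $15$ matchings (two external points with the same three bisecants would be a second common point of three non-concurrent lines). Since $S_6$ is transitive on perfect matchings, the external points form a single orbit. For the line configurations I would exhibit the stabiliser directly: with $P=(0,0,1)$ and the two lines of the $q=3$ set at $x_0=0$ and $x_1=0$, the external points are the six $(1,a,b)$ with $a\neq0$, the elations $(x_0,x_1,x_2)\mapsto(x_0,x_1,x_2+cx_0)$ realise every shift $b\mapsto b+c$ and the scaling $(x_0,x_1,x_2)\mapsto(x_0,2x_1,x_2)$ interchanges $a=1$ and $a=2$, so already these stabiliser elements act transitively; the configuration inside three concurrent lines and the complete arc are treated the same way.

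The main obstacle is part (b) for the three non-hyperoval sets, where one must be certain that the full stabiliser has been identified and that the exhibited elements generate a transitive subgroup. The safest route is to read off the stabiliser order and the orbit lengths on $PG(2,q)\setminus S$ directly from the same exhaustive search that produced Table~\ref{Table:ClassificationOptimal}, with the geometric arguments above serving as an independent confirmation.
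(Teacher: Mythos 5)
The paper offers no proof of this Observation at all: it is presented as a fact read off from the exhaustive computer classification recorded in Table~\ref{Table:ClassificationOptimal}, and your closing fallback (``read off the stabiliser order and the orbit lengths\ldots from the same exhaustive search'') is therefore exactly the paper's own argument. What you add on top of that is genuinely different and worthwhile for two of the four cases: the counts in (a) are correct, and your orbit argument for the hyperoval in $PG(2,4)$ --- the bijection between the $15$ external points and the $15$ one-factors of the six points via their three bisecants, plus transitivity of $S_6$ (or even $A_6$) on one-factors --- is a clean, complete proof, as is the elation-plus-scaling argument for the two concurrent lines in $PG(2,3)$.

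The gap is the sentence claiming the remaining two configurations are ``treated the same way.'' They are not. You never identify the sets: for $q=3$, $\mu=4$, the $7$-set is \emph{three} concurrent lines $\ell_1\cup\ell_2\cup\ell_3$ through $P$ minus one point $A_i\in\ell_i\setminus\{P\}$ from each, with $A_1,A_2,A_3$ \emph{not} collinear (collinearity of the $A_i$ breaks the OS property, which already requires a check your sketch does not contain). For that set the six external points split a priori into the triangle $\{A_1,A_2,A_3\}$ and the three collinear points of $\ell_4\setminus\{P\}$, so single-orbitness requires a collineation fixing $S$ that sends a vertex of the triangle onto the fourth line through $P$ --- nothing like the elation/scaling argument for the two-line case, and not something that follows from an obvious symmetry of the picture. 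The $q=4$, size-$7$ arc case is likewise left unspecified. So as a self-contained proof the proposal covers only two of the four cases; as a supplement to the computational classification it is fine, but then it proves no more than the paper already asserts without proof.
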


In the following we give some constructions of optimal
$(1,\mu)$-OS in $PG(2,q)$ providing  many entries in Table
\ref{Table:ClassificationOptimal}.

\begin{theorem}\label{th_optim} The following sets $\mathcal{S}$ are
$(1,\mu)$-OS in $PG(2,q)$.
\begin{itemize}
   \item [(i)] The set $\mathcal{S}$ is the union of $L$
       concurrent lines, $2\le L\le q$. It holds that
       $$|\mathcal{S}|=1+Lq, ~\mu=\binom{L}{2}q.$$
   \item [(ii)] The set $\mathcal{S}$ is the union of $q$
       concurrent lines and $b$ other points on the
       $(q+1)$-th one, $1\le b\le q-1$. It holds that
       $$|\mathcal{S}|=1+q^{2}+b,
       ~\mu=\binom{b+1}{2}+\binom{q}{2}q.$$
   \item [(iii)] The set $\mathcal{S}$ is a triangle. It
       holds that
       $$|\mathcal{S}|=3q,~\mu=3+(q-2)\binom{3}{2}=3(q-1).$$
   \item [(iv)] The set $\mathcal{S}=PG(2,q)\setminus T$
       where $T$ is a vertex-less triangle. It
       holds that
       $$|\mathcal{S}|=q^{2}-2q+4,~\mu=1+\binom{q}{2}+(q-1)\binom{q-2}{2}.$$

 \end{itemize}
\end{theorem}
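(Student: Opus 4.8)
The plan is to handle all four parts through a single principle: by Proposition \ref{prop optimal (1,mu) OS}(ii), a $(1,\mu)$-saturating set is an OS set exactly when every external point lies on precisely $\mu$ secants counted with multiplicity. So for each configuration I would fix an arbitrary $Q\in PG(2,q)\setminus\mathcal{S}$, partition the $q+1$ lines through $Q$ according to $\#(\ell\cap\mathcal{S})$, and sum the multiplicities $\binom{\#(\ell\cap\mathcal{S})}{2}$. In every case this sum turns out to be independent of $Q$, which simultaneously verifies that the set is saturating and that it is optimal. Conditions (M1)-(M2) are immediate throughout: each set spans the plane and omits at least one point.

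For part (i), letting $O$ be the common point of the $L$ lines, I would count $|\mathcal{S}|=1+Lq$ (the centre once, plus $q$ further points on each line). For external $Q$ the single line $QO$ meets $\mathcal{S}$ only in $O$ and contributes $0$, while each of the remaining $q$ lines avoids $O$ and hence crosses all $L$ lines in $L$ distinct points, contributing $\binom{L}{2}$; summing gives $\mu=q\binom{L}{2}$. Part (ii) is the same computation with one line of the pencil replaced by $b$ of its points. Here every external point lies on the special line $\ell_{q+1}$: the secant along $\ell_{q+1}$ carries the $1+b$ points $O$ and the chosen ones, giving $\binom{b+1}{2}$, whereas the other $q$ lines through $Q$ miss $O$ and the chosen points and are therefore $q$-secants giving $q\binom{q}{2}$. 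Part (iii) uses the three sides of a triangle ($3q$ points after discounting the three doubly counted vertices); for external $Q$ the three lines $QV_{ij}$ to the vertices are $2$-secants of multiplicity $1$ each, and the remaining $q-2$ lines are $3$-secants of multiplicity $3$ each, yielding $\mu=3+3(q-2)=3(q-1)$.

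Part (iv) I would instead derive from the complement, via Proposition \ref{TheoremComplement}. Writing $T$ for the vertex-less triangle, i.e. the $3(q-1)$ non-vertex points on the three sides, gives $|\mathcal{S}|=q^{2}+q+1-3(q-1)=q^{2}-2q+4$. It then suffices to check that the secant distribution of $T$ through any of its points is uniform. Fixing $P\in T$ on a side $\ell_1$, the line $\ell_1$ is a $(q-1)$-secant of $T$; the line $PV_{23}$ joining $P$ to the opposite vertex is a tangent, since its intersections with $\ell_2$ and $\ell_3$ both collapse to the excluded vertex $V_{23}$; and the remaining $q-1$ lines through $P$ are $3$-secants. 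By the symmetry of the triangle this holds verbatim for every point of $T$, so Proposition \ref{TheoremComplement} applies with $x_{q-1}=x_1=1$ and $x_3=q-1$, giving $\mu=\binom{2}{2}+\binom{q}{2}+(q-1)\binom{q-2}{2}=1+\binom{q}{2}+(q-1)\binom{q-2}{2}$.

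The hard part will not be the arithmetic but the correct bookkeeping of the degenerate secants: the line through the centre in (i)-(ii), the lines through a vertex in (iii), and the tangent to the opposite vertex in (iv), where a naive count would overstate $\#(\ell\cap\mathcal{S})$. The crucial supporting observation in each case is that these special lines are genuinely distinct and genuinely shorter -- for instance that $QV_{12},QV_{13},QV_{23}$ are three different lines precisely because $Q$ lies on no side of the triangle -- and it is exactly this that makes the multiplicities sum to the claimed constant $\mu$ rather than to something that varies with $Q$.
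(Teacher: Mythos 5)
Your proposal is correct and follows essentially the same route as the paper: parts (i)--(iii) are proved by classifying the lines through an external point $P$ (the tangent $PG$ in (i), the $(b+1)$-secant $PG$ in (ii), the three bisecants through the vertices in (iii)) and summing multiplicities, and part (iv) is reduced to Proposition \ref{TheoremComplement} via the secant distribution of the vertex-less triangle. Your write-up simply supplies the bookkeeping details that the paper's very terse proof leaves implicit.
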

\begin{proof} The sizes of $\mathcal{S}$ are obvious. Let $P$ be a point
of $PG(2,q)\setminus \mathcal{S}$. Let $G$ be the intersection
point of concurrent lines.
\begin{itemize}
   \item [(i)]  Every line through $P$ distinct from $PG$
       intersects  $\mathcal{S}$ in $L$ points.
   \item [(ii)] Every line through $P$ distinct from $PG$
       intersects  $\mathcal{S}$ in $q$ points.   The line
       $PG$ provides is a $(b+1)$-secant of $\mathcal{S}$.
   \item [(iii)] Three lines through $P$ and one of
       vertices of the triangle  are bisecants of
       $\mathcal{S}$, whereas every other line is a
       3-secant.
        \item [(iv)] This follows directly from Proposition \ref{TheoremComplement}.
\end{itemize}
\end{proof}


\medskip
{\it E-mail address: }dbartoli@cage.ugent.be\\
\indent{\it E-mail address: }adav@iitp.ru\\
\indent{\it E-mail address: }massimo.giulietti@unipg.it\\
\indent{\it E-mail address: }stefano.marcugini@unipg.it\\
\indent{\it E-mail address: }fernanda.pambianco@unipg.it
\end{document}